\begin{document}

\newtheorem{theorem}[subsection]{Theorem}
\newtheorem{proposition}[subsection]{Proposition}
\newtheorem{lemma}[subsection]{Lemma}
\newtheorem{corollary}[subsection]{Corollary}
\newtheorem{conjecture}[subsection]{Conjecture}
\newtheorem{prop}[subsection]{Proposition}
\numberwithin{equation}{section}
\newcommand{\mr}{\ensuremath{\mathbb R}}
\newcommand{\mc}{\ensuremath{\mathbb C}}
\newcommand{\dif}{\mathrm{d}}
\newcommand{\intz}{\mathbb{Z}}
\newcommand{\ratq}{\mathbb{Q}}
\newcommand{\natn}{\mathbb{N}}
\newcommand{\comc}{\mathbb{C}}
\newcommand{\rear}{\mathbb{R}}
\newcommand{\prip}{\mathbb{P}}
\newcommand{\uph}{\mathbb{H}}
\newcommand{\fief}{\mathbb{F}}
\newcommand{\majorarc}{\mathfrak{M}}
\newcommand{\minorarc}{\mathfrak{m}}
\newcommand{\sings}{\mathfrak{S}}
\newcommand{\fA}{\ensuremath{\mathfrak A}}
\newcommand{\mn}{\ensuremath{\mathbb N}}
\newcommand{\mq}{\ensuremath{\mathbb Q}}
\newcommand{\half}{\tfrac{1}{2}}
\newcommand{\f}{f\times \chi}
\newcommand{\summ}{\mathop{{\sum}^{\star}}}
\newcommand{\chiq}{\chi \bmod q}
\newcommand{\chidb}{\chi \bmod db}
\newcommand{\chid}{\chi \bmod d}
\newcommand{\sym}{\text{sym}^2}
\newcommand{\hhalf}{\tfrac{1}{2}}
\newcommand{\sumstar}{\sideset{}{^*}\sum}
\newcommand{\sumprime}{\sideset{}{'}\sum}
\newcommand{\sumprimeprime}{\sideset{}{''}\sum}
\newcommand{\sumflat}{\sideset{}{^\flat}\sum}
\newcommand{\shortmod}{\ensuremath{\negthickspace \negthickspace \negthickspace \pmod}}
\newcommand{\V}{V\left(\frac{nm}{q^2}\right)}
\newcommand{\sumi}{\mathop{{\sum}^{\dagger}}}
\newcommand{\mz}{\ensuremath{\mathbb Z}}
\newcommand{\leg}[2]{\left(\frac{#1}{#2}\right)}
\newcommand{\muK}{\mu_{\omega}}
\newcommand{\thalf}{\tfrac12}
\newcommand{\lp}{\left(}
\newcommand{\rp}{\right)}
\newcommand{\Lam}{\Lambda_{[i]}}
\newcommand{\lam}{\lambda}
\def\L{\fracwithdelims}
\def\om{\omega}
\def\pbar{\overline{\psi}}
\def\phis{\phi^*}
\def\lam{\lambda}
\def\lbar{\overline{\lambda}}
\newcommand\Sum{\Cal S}
\def\Lam{\Lambda}
\newcommand{\sumtt}{\underset{(d,2)=1}{{\sum}^*}}
\newcommand{\sumt}{\underset{(d,2)=1}{\sum \nolimits^{*}} \widetilde w\left( \frac dX \right) }

\newcommand{\hf}{\tfrac{1}{2}}
\newcommand{\af}{\mathfrak{a}}
\newcommand{\Wf}{\mathcal{W}}

\newtheorem{mylemma}{Lemma}
\newcommand{\intR}{\int_{-\infty}^{\infty}}

\theoremstyle{plain}
\newtheorem{conj}{Conjecture}
\newtheorem{remark}[subsection]{Remark}

\makeatletter
\def\widebreve{\mathpalette\wide@breve}
\def\wide@breve#1#2{\sbox\z@{$#1#2$}%
     \mathop{\vbox{\m@th\ialign{##\crcr
\kern0.08em\brevefill#1{0.8\wd\z@}\crcr\noalign{\nointerlineskip}%
                    $\hss#1#2\hss$\crcr}}}\limits}
\def\brevefill#1#2{$\m@th\sbox\tw@{$#1($}%
  \hss\resizebox{#2}{\wd\tw@}{\rotatebox[origin=c]{90}{\upshape(}}\hss$}
\makeatletter

\title[Bounds for moments of quadratic Dirichlet $L$-functions of prime-related moduli]{Bounds for moments of quadratic Dirichlet $L$-functions
of prime-related moduli}

\author{Peng Gao and Liangyi Zhao}

\begin{abstract}
 In this paper, we study the $k$-th moment of central values of the family of quadratic Dirichlet $L$-functions of moduli $8p$, with $p$ ranging over odd primes.  Assuming the truth of the generalized Riemann hypothesis, we establish sharp upper and lower bounds for the $k$-th power moment of these $L$-values for all real $k \geq 0$.
\end{abstract}

\maketitle

\noindent {\bf Mathematics Subject Classification (2010)}: 11M06  \newline

\noindent {\bf Keywords}: moments, quadratic Dirichlet $L$-functions, prime moduli, lower bounds, upper bounds

\section{Introduction}
\label{sec 1}

 Moments of families of $L$-functions at the central value are important subjects in analytical number theory due to their rich arithmetic applications. In \cite{Jutila}, M. Jutila initiated the study on the first and second moment of the family of quadratic Dirichlet $L$-functions $L( 1/2, \chi_d)$ for $d$ running over fundamental discriminants, where $\chi_d=\leg {d}{\cdot}$ is the Kronecker symbol. Asymptotic formulae for these moments were given in \cite{Jutila}. In the same paper, Jutila also obtained an asymptotic formula for the first moment of the family of quadratic Dirichlet $L$-functions of prime moduli $L(1/2, \chi_p)$, for primes $p$ satisfying various congruence conditions. This resolved a conjecture raised earlier by D. Goldfeld and C. Viola in \cite{G&V}. In \cite{B&P}, S. Baluyot and K. Pratt  obtained an asymptotic formula for the second moment of $L(\half, \chi_p)$ under the generalized Riemann hypothesis (GRH). They also obtained sharp upper and lower bounds for the second moment unconditionally as well as for the third moment under the assumption that $L(1/2, \chi_n) \geq 0$ for certain integers $n$. The function field analogue of moments of quadratic Dirichlet $L$-functions of prime moduli has been studied by J. Andrade and J. P. Keating in \cite{Andrade&Keating} and by H. M. Bui and A. Florea in \cite{B&F2020}. \newline

    Based on results from Random Matrix Theory,  J. P. Keating and N. C. Snaith  conjectured in \cite{Keating-Snaith02} that for all $k \geq 0$,
\begin{align*}
\begin{split}
  \sum_{|d| \leq X}L \left( \frac{1}{2}, \chi_d \right)^k \sim C_k X(\log X)^{k(k+1)/2}
\end{split}
\end{align*}
  as $X \rightarrow \infty$, where the numbers $C_k$ are explicit constants.  It is expected that the above relations continue to hold (with different constants), if one considers instead
\begin{align*}
\begin{split}
  \sum_{p \leq X} (\log p)L \left( \frac{1}{2}, \chi_p \right)^k.
\end{split}
\end{align*}

 The aim of this paper is to establish sharp bounds of the right order of magnitude for the above expression. Due to some technical reasons, we consider $\chi_{8p}$ instead of $\chi_{p}$ for odd primes $p$ and we assume the truth of GRH throughout.  In particular as a consequence of GRH, $L(1/2, \chi_{8p}) \geq 0$ for all odd primes $p$. \newline

  We shall establish upper and lower bounds separately. Our lower bound result is giving in the following.
\begin{theorem}
\label{thmlowerbound}
    Assume the truth of RH for $\zeta(s)$ and GRH for $L(s, \chi_{8p})$ for all odd primes $p$. We have for large $X$
   and all real numbers $k \geq 0$,
\begin{align*}
\begin{split}
    \sum_{\substack{2< p \leq X}} ( \log p) L \left( \frac{1}{2}, \chi_{8p} \right)^k  \gg_k & X(\log X)^{k(k+1)/2}.
\end{split}
\end{align*}
\end{theorem}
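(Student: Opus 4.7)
The plan is to follow the lower bound technique of Heap--Soundararajan (for $k \geq 1$) and Radziwill--Soundararajan (for $0 \leq k \leq 1$), suitably adapted to the family $\{\chi_{8p}\}$ of quadratic characters of prime-related moduli. The strategy is to introduce a Dirichlet polynomial $\mathcal{N}(\chi)$ which mimics a fractional power of $L(\hf, \chi)$ built out of short prime sums, apply H\"older's inequality, and reduce the theorem to two auxiliary moment estimates.

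More precisely, partition the primes in $[2, X^{\vartheta}]$, for a small fixed $\vartheta > 0$, into $J = O(\log\log X)$ disjoint intervals $I_1, \dots, I_J$ with geometrically growing endpoints, and for each $j$ define the short prime sum
\[
P_j(\chi) = \sum_{p \in I_j} \frac{\chi(p)}{\sqrt{p}}.
\]
For a parameter $c=c(k)$, take $\mathcal{N}_j(\chi, c)$ to be the truncation of $\exp(c P_j(\chi))$ to a degree proportional to $\# I_j$, and set $\mathcal{N}(\chi, c) = \prod_{j=1}^{J} \mathcal{N}_j(\chi, c)$. This is a Dirichlet polynomial supported on squarefree integers of length at most $X^{\vartheta'}$ with $\vartheta' < 1$, and it models $L(\hf, \chi)^{c}$.

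Applying H\"older's inequality to $\sum_{p}(\log p)\, L(\hf, \chi_{8p}) |\mathcal{N}(\chi_{8p})|^2$ (for $k \geq 1$, with a symmetric rearrangement when $0 \leq k \leq 1$) reduces the theorem to a lower bound for the twisted first moment
\[
M_1 = \sum_{2 < p \leq X} (\log p)\, L(\hf, \chi_{8p}) |\mathcal{N}(\chi_{8p}, c)|^2
\]
together with an upper bound for the auxiliary high moment
\[
M_2 = \sum_{2 < p \leq X} (\log p)\, |\mathcal{N}(\chi_{8p}, c')|^{2k/(k-1)},
\]
where $c$ and $c'$ are chosen so that H\"older's exponents balance and the ratio $M_1^{k}/M_2^{k-1}$ produces $X(\log X)^{k(k+1)/2}$.

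The main obstacle is the evaluation of $M_1$. Using an approximate functional equation to replace $L(\hf, \chi_{8p})$ by a smoothly truncated Dirichlet polynomial and expanding $|\mathcal{N}(\chi_{8p}, c)|^2$, one is reduced to sums of the shape $\sum_{2<p\leq X}(\log p)\, \chi_{8p}(n)$ for various odd $n$. Quadratic reciprocity converts $\chi_{8p}(n)$ to $\chi_n(p)$ up to controllable factors, and GRH for $L(s, \chi_n)$, applied through the explicit formula, bounds the resulting character sums over primes by a quantity of size $\sqrt{X}(\log nX)^{2}$, which is negligible unless $n$ is an odd square. The main term then comes from square $n$, at which point the sum collapses to $\sum_{p \leq X}(\log p)$, estimated by the prime number theorem under RH for $\zeta(s)$. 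The auxiliary moment $M_2$ contains no $L$-factor and is considerably simpler: after the same reduction, orthogonality extracts diagonal contributions from square arguments, and standard combinatorial bounds on moments of Dirichlet polynomials of the special product form $\mathcal{N}$ give an upper bound of the expected order $X(\log X)^{E(k)}$ for an explicit $E(k)$. The most delicate point will be balancing the parameters $\vartheta$ and $J$, and the truncation degrees of the $\mathcal{N}_j$, so that the total length of $\mathcal{N}$ stays well below $X$ and the GRH bounds on the character sums remain effective; this is where the technical heart of the argument lies.
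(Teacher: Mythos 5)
Your overall strategy is the same as the paper's: Harper-style short prime sums $\mathcal{P}_i$ over geometrically growing ranges, truncated exponentials multiplied together to form a short Dirichlet polynomial $\mathcal{N}$, H\"older's inequality in the Heap--Soundararajan style, a twisted first moment evaluated via the approximate functional equation plus GRH bounds on $\sum_p(\log p)\chi_{8p}(n)$ with the main term from square $n$, and a mollifier-only moment handled by the same character-sum lemma. For $k\ge 1$ your two-term H\"older reduction is exactly what the paper does (its Lemma 3.2, case $k>1/2$ after the substitution $k\mapsto 2k$), and your treatment of $M_1$ and $M_2$ matches Propositions 3.3, 3.5 and 3.6 of the paper, modulo routine points (e.g.\ $\mathcal{N}$ is not supported on squarefree integers, and a fractional power such as $\mathcal{N}^{2k/(k-1)}$ must first be dominated by a genuine Dirichlet polynomial as in the paper's Lemma 3.1).

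The genuine gap is in the range $0<k<1$. The "symmetric rearrangement" of H\"older you allude to (as in Radziwi\l\l--Soundararajan and Heap--Soundararajan) unavoidably introduces a third factor: an upper bound of the correct order for the \emph{mollified second moment} $\sum_{p\le X}(\log p)\,L(\tfrac12,\chi_{8p})^2\,\mathcal{N}(p,\cdot)$. Your proposal never identifies this object, and it cannot be handled by the mechanism you describe for $M_2$ ("orthogonality extracts diagonal contributions"), because it still carries two copies of the $L$-function: expanding $L^2$ via the approximate functional equation produces Dirichlet polynomials of length about the conductor, far beyond the range in which the GRH character-sum bound $\sqrt{X}(\log nX)^2$ is negligible, and indeed even the plain second moment of $L(\tfrac12,\chi_p)$ over primes is only known asymptotically under GRH (Baluyot--Pratt). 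The paper does not evaluate this twisted second moment at all; instead it bounds $L(\tfrac12,\chi_{8p})^2$ pointwise by exponentials of short prime sums using the Soundararajan--Harper GRH upper bound for $\log|L(\tfrac12,\chi_{8p})|$, decomposes the primes into exceptional sets $\mathcal{S}(j)$ according to the size of those prime sums, and reduces everything to mollifier-type averages (its Section 3.3, Proposition 3.4, together with the weak a priori moment bound of Proposition 3.8). Without this additional ingredient --- or some substitute for the twisted second moment bound --- your argument does not close for $0<k<1$.
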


  The proof of Theorem \ref{thmlowerbound} relies on the lower bounds principal developed by W. Heap and K. Soundararajan in \cite{H&Sound} towards establishing sharp lower bounds for moments for families of $L$-functions at the central value. In this process, we shall actually need to make use of a method of K. Soundararajan in \cite{Sound2009} together with its refinement by A. J. Harper in \cite{Harper} to derive sharp upper bounds for moments of families of $L$-functions under GRH. The same approach also leads to our sharp upper bounds as follows.
\begin{theorem}
\label{thmupperbound}
   Assume the truth of RH for $\zeta(s)$ and GRH for $L(s, \chi_{8p})$ for all odd primes $p$. We have for large $X$
   and all real numbers $k \geq 0$,
\begin{align*}
\begin{split}
    \sum_{\substack{2< p \leq X}} ( \log p) L \left( \frac{1}{2}, \chi_{8p} \right)^k   \ll_k & X(\log X)^{k(k+1)/2}.
\end{split}
\end{align*}
\end{theorem}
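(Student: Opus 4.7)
The plan is to adapt the Soundararajan--Harper upper-bound machinery to the family $\{\chi_{8p} : p \text{ odd prime}\}$. The starting point is Soundararajan's conditional inequality: under GRH, for any $x \geq 2$ and any $\lambda>0$,
\begin{align*}
\log L(\hf,\chi_{8p}) \leq \Re \sum_{n \leq x} \frac{\Lambda(n)\, \chi_{8p}(n)}{n^{\hf+\lambda/\log x} \log n} \cdot \frac{\log(x/n)}{\log x} + \frac{(1+\lambda)\log(8p)}{2\log x} + O(1).
\end{align*}
Choosing $x = X^{1/r}$ for some large $r = r(k)$ renders the explicit error $O(1)$; the prime-power contributions with exponent $\geq 2$ are uniformly $O(1)$, so only the genuine prime polynomial $\Re \sum_{p' \leq x} \chi_{8p}(p')/\sqrt{p'}$ needs to be controlled.

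Following Harper, I would then partition the primes $p' \leq x$ into dyadic blocks $I_j$ with ratios doubling on a $\log\log$-scale, writing $\mathcal{P}_j(\chi) = \Re \sum_{p' \in I_j} \chi(p')/\sqrt{p'}$. Each prime $p$ in the outer sum is stratified according to the largest index $J$ for which $|\mathcal{P}_j(\chi_{8p})| \leq \ell_j$ for all $j < J$, where $\ell_j$ are truncation levels calibrated so that $\sum_j \ell_j |I_j|^{\text{length}} = O(1)$. On each stratum, the bound $L(\hf,\chi_{8p})^k \leq \prod_j e^{k \mathcal{P}_j(\chi_{8p})}$ is opened via a truncated Taylor series $\sum_{m \leq 2\ell_j} (k\mathcal{P}_j)^m/m!$ on the good set, and via a large-moment Chebyshev bound on the bad sets where some $|\mathcal{P}_j|$ exceeds $\ell_j$. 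The outcome is to reduce everything to computing averages of short Dirichlet polynomials in $\chi_{8p}$.

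The key input is then the character sum
\begin{align*}
\sum_{2 < p \leq X} (\log p)\, \chi_{8p}(n),
\end{align*}
for squarefree $n \leq x^{O(\ell)}$. Writing $\chi_{8p}(n) = \leg{8p}{n}$ and applying quadratic reciprocity converts this into a weighted sum $\sum_p (\log p)\leg{n}{p}$ twisted by factors depending on $n, p \bmod 8$; by GRH for the Dirichlet $L$-function attached to $\leg{n}{\cdot}$, this is $O(X^{\hf+\epsilon})$ unless $n$ is a perfect square, in which case the prime number theorem yields a main term of size $X/\log X$. Assembling the diagonal contributions over the Taylor expansion recovers the predicted order $X(\log X)^{k(k+1)/2}$, and the stratified bad-set contributions are negligible by Harper's large-deviation bounds.

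The main obstacle is the bookkeeping in the Harper stratification: one must verify that no spurious $\log\log X$ factor creeps into the exponent of $\log X$, and that the character-sum error $O(X^{\hf+\epsilon})$ survives uniformly over all $n$ in the support of the expanded polynomial (whose length grows with the total truncation degree $\sum_j \ell_j$). A secondary technical point is the handling of the weight $\log p$: it produces a $1/\log X$ in the main term, which must be tracked through the combinatorial arithmetic factor so that the final constant matches the Keating--Snaith shape for the $8p$ family rather than the fundamental discriminant family.
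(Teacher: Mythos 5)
Your overall strategy---Soundararajan's conditional bound for $\log L(\tfrac{1}{2},\chi_{8p})$ followed by Harper's stratification and reduction to GRH-controlled character-sum averages over primes $p$---is the same route the paper follows, via Lemma~\ref{lem: logLbound}, the sets $\mathcal{S}(j)$, and the prime character-sum estimate in Lemma~\ref{lemma logd}. However, there is a concrete error in your accounting of the $\Lambda(n)$-sum. You assert that the prime powers with exponent $\geq 2$ contribute uniformly $O(1)$, so that only the genuine prime polynomial $\Re\sum_{p'\le x}\chi_{8p}(p')/\sqrt{p'}$ survives. This fails already at the squares: since $\chi_{8p}(q)^2 = 1$ for every prime $q \nmid 8p$, the $n = q^2$ terms contribute
\[
\frac{1}{2}\sum_{q\le\sqrt{x}}\frac{1}{q^{1+2\lambda/\log x}}\cdot\frac{\log(x/q^2)}{\log x} \;=\; \frac{1}{2}\log\log x + O(1),
\]
which with $x = X^{1/r}$ is $\asymp\log\log X$, not $O(1)$. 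This term is not a nuisance to discard---it is precisely the source of the $(\log X)^{k/2}$ factor in the answer. Exponentiating, $L(\tfrac{1}{2},\chi_{8p})^k$ acquires a factor $(\log x)^{k/2} \ll (\log X)^{k/2}$, which multiplies the $(\log X)^{k^2/2}$ produced by the Gaussian-type average of the genuine prime sum to give the target $(\log X)^{k(k+1)/2}$. If the squares really were $O(1)$, you would instead predict $(\log X)^{k^2/2}$ and miss the linear part of the exponent. The paper's Lemma~\ref{lem: logLbound} carries the $\tfrac{1}{2}\log\log x$ explicitly for exactly this reason, and it propagates through the stratification as the $(\log X)^{k/2}$ prefactor in~\eqref{Lkbound}.

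A minor further point: you describe the $\log p$ weight as producing a $1/\log X$ in the main term, but the effect is the opposite. Without the weight $\sum_{p\le X}1\sim X/\log X$, whereas $\sum_{p\le X}\log p\sim X$, so the weighted character sum (Lemma~\ref{lemma logd}) yields the clean main term $\delta_{c=\square}\widehat{\Phi}(1)X$ and there is no extraneous $1/\log X$ to track through the combinatorics.
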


Theorems \ref{thmlowerbound} and \ref{thmupperbound} can be combined to readily produce the following result on the order of magnitude of the family of $L$-functions under our consideration.
\begin{corollary}
\label{thmorderofmag}
   Assume the truth of RH for $\zeta(s)$ and GRH for $L(s, \chi_{8p})$ for all odd primes $p$. We have for large $X$
   and all real numbers $k \geq 0$,
\begin{align*}
\begin{split}
   \sum_{\substack{2< p \leq X}} ( \log p)L \left( \frac{1}{2}, \chi_{8p} \right)^k \asymp_k &
   X(\log X)^{k(k+1)/2}.
\end{split}
\end{align*}
\end{corollary}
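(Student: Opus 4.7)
My plan for Corollary \ref{thmorderofmag} is simply to concatenate the two preceding theorems. By definition, the relation $f(X) \asymp_k g(X)$ means that there exist positive constants $c_1(k), c_2(k)$ such that
\[
c_1(k)\, g(X) \;\leq\; f(X) \;\leq\; c_2(k)\, g(X)
\]
for all sufficiently large $X$. Setting
\[
f(X) \;=\; \sum_{\substack{2<p\leq X}} (\log p)\, L\!\left(\tfrac{1}{2},\chi_{8p}\right)^{k}, \qquad g(X) \;=\; X(\log X)^{k(k+1)/2},
\]
the lower bound $f(X) \gg_k g(X)$ is furnished directly by Theorem \ref{thmlowerbound}, while the matching upper bound $f(X) \ll_k g(X)$ is the content of Theorem \ref{thmupperbound}. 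Both results are proved under the same hypotheses, namely RH for $\zeta(s)$ together with GRH for $L(s,\chi_{8p})$ for all odd primes $p$, and both hold in the same range $k \geq 0$ of real parameters. Hence the two can be quoted simultaneously, with no compatibility issue between the admissible values of $k$ or between the implied constants.

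Consequently, there is no independent obstacle at the corollary stage: the entire difficulty of the result lies in the two underlying theorems. The lower bound in Theorem \ref{thmlowerbound} is obtained by the Heap--Soundararajan lower bounds principle, which relies on evaluating twisted first moments and a Cauchy--Schwarz argument against an auxiliary Dirichlet polynomial; the upper bound in Theorem \ref{thmupperbound} follows from the Soundararajan conditional upper bounds machinery with Harper's refinement, which requires careful bookkeeping for the prime-indexed family. Once those two bounds are in hand, no additional computation is required to deduce Corollary \ref{thmorderofmag}.
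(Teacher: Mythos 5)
Your proposal is correct and matches the paper exactly: the corollary is stated in the paper as an immediate combination of Theorems \ref{thmlowerbound} and \ref{thmupperbound}, with no further argument given or needed.
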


\section{Preliminaries}
\label{sec 2}

   We gather first a few auxiliary results required in our proofs. In the rest of the paper, we shall reserve the symbols $p$, $q$
for primes.

\subsection{Sums over primes}
\label{sec2.4}

  We note the following result on various summations over prime numbers.
\begin{lemma}
\label{RS} Let $x \geq 2$. There is a constant $c$ such that
$$
\sum_{p\le x} \log p= x + O\big(x \exp(-c\sqrt{\log x})\big).
$$
 Also, there is a constant $b$ such that
$$
\sum_{p\le x} \frac{1}{p} = \log \log x + b+ O\Big(\frac{1}{\log x}\Big).
$$
\end{lemma}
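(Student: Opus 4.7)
The plan is to deduce both statements from the Prime Number Theorem with the classical de la Vall\'ee Poussin error term; the first is essentially a restatement of the PNT, and the second then follows by partial summation.

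For the first estimate, I would pass from $\theta(x):=\sum_{p\le x}\log p$ to the von Mangoldt sum $\psi(x):=\sum_{n\le x}\Lambda(n)$; the elementary difference $\psi(x)-\theta(x)\ll \sqrt{x}\log^2 x$ is comfortably absorbed into the target error. Then I apply a truncated Perron formula
$$\psi(x)=\frac{1}{2\pi i}\int_{c-iT}^{c+iT}\left(-\frac{\zeta'(s)}{\zeta(s)}\right)\frac{x^s}{s}\,ds+O\!\left(\frac{x\log^2 x}{T}\right)$$
with $c=1+1/\log x$, and shift the contour to the vertical line $\mathrm{Re}(s)=1-c_0/\log T$, which lies inside the classical zero-free region $\sigma>1-c_0/\log(|t|+2)$ of $\zeta(s)$. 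The only singularity crossed is the simple pole of $\zeta$ at $s=1$, contributing residue $x$. Using the standard bound $\zeta'(s)/\zeta(s)\ll \log^2(|t|+2)$ throughout that region, both the shifted vertical integral and the two horizontal pieces are $\ll x^{1-c_0/\log T}\log^2 T$. Choosing $T=\exp(c_1\sqrt{\log x})$ balances this against $x\log^2 x/T$ and yields $\psi(x)=x+O\!\left(x\exp(-c\sqrt{\log x})\right)$, which is the first identity.

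For the second estimate, set $A(t):=\sum_{p\le t}\log p=t+E(t)$ with $E(t)\ll t\exp(-c\sqrt{\log t})$ by the first part. A Riemann--Stieltjes integration gives
$$\sum_{p\le x}\frac{1}{p}=\int_{2^-}^{x}\frac{dA(t)}{t\log t}=\int_{2}^{x}\frac{dt}{t\log t}+\int_{2^-}^{x}\frac{dE(t)}{t\log t},$$
and the first integral evaluates to $\log\log x-\log\log 2$. For the second, integration by parts produces a boundary term $E(x)/(x\log x)\ll 1/\log x$ and a convergent integral $-\int_{2^-}^{\infty} E(t)\,d(1/(t\log t))$ whose tail beyond $x$ is likewise $\ll 1/\log x$ by the superpolynomial decay of $E(t)$. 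Packaging the resulting convergent integral together with $-\log\log 2$ into a single constant $b$ gives the stated asymptotic.

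Both steps are entirely classical and no genuine obstacle arises. The only point requiring mild care is ensuring that partial summation in the second step produces the sharp error $O(1/\log x)$ rather than something weaker, but this is a routine consequence of the strong error bound $E(t)\ll t\exp(-c\sqrt{\log t})$ obtained in the first step.
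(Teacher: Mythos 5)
Your argument is correct, but the paper does not actually supply a proof of this lemma: it simply cites \cite[Theorem 6.9]{MVa1} for the first statement and \cite[Theorem 2.7]{MVa1} for the second. So the relevant comparison is with the standard textbook treatments. For the first estimate (prime number theorem with the de la Vall\'ee Poussin error term), your route — pass from $\theta$ to $\psi$, apply the truncated Perron formula, shift the contour into the classical zero-free region, collect the residue at $s=1$, and choose $T=\exp(c_1\sqrt{\log x})$ — is essentially identical to the textbook proof, and I see no gap in it.

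For the second estimate you take a genuinely different route from the cited source. Montgomery--Vaughan's Theorem~2.7 is proved elementarily (via Mertens's identity $\sum_{n\le x}\Lambda(n)/n=\log x+O(1)$ and Abel summation), and in particular does not rely on the prime number theorem at all; that is why it appears so early in their book. You instead deduce it by partial summation from the strong form of the PNT you have just established. This is perfectly valid and in fact yields an error term of size $O\bigl(\exp(-c'\sqrt{\log x})\bigr)$, much stronger than the stated $O(1/\log x)$, but it is considerably heavier machinery than the result requires. Two small cosmetic points: in the partial summation you should also record the boundary contribution at $t=2^-$, namely $E(2^-)/(2\log 2)$, which is a constant that gets absorbed into $b$ along with $-\log\log 2$ and the convergent integral; and the phrase ``superpolynomial decay'' is a slight misnomer for $E(t)\ll t\exp(-c\sqrt{\log t})$ (the decay of $E(t)/t$ is faster than any power of $1/\log t$ but slower than any power of $1/t$), though the estimate you need is of course still correct.
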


The estimations in the above lemma are given by \cite[Theorem 6.9]{MVa1} and \cite[Theorem 2.7]{MVa1}, respectively.  There are, of course, better results under GRH.  But the unconditional results in Lemma~\ref{RS} are sufficient for our purpose.

\subsection{The approximate functional equation}
\label{sect: apprfcneqn}

  We recall the following approximate functional equation concerning $L(1/2, \chi_{8p})$ given in \cite[Lemma 2.2]{sound1}.
\begin{lemma}[Approximate functional equation]
\label{lem:AFE}
  For any odd prime  $p$, we have
\begin{align*}
\begin{split}
 L\left( \frac{1}{2}, \chi_{8d} \right) = & 2\sum^{\infty}_{\substack{n=1}} \frac{\chi_{8p}(n)}{\sqrt{n}} V
\left(\frac{ n}{\sqrt{d}} \right),
\end{split}
\end{align*}
 where for any real number $t>0$,
\begin{align}
\label{eq:Vdef}
 V(t) = \frac{1}{2 \pi i} \int\limits\limits_{(2)}  \left(\frac{8}{\pi}\right)^{s/2}
\left ( \frac {\Gamma(s/2+1/4)}{\Gamma(1/4)} \right ) t^{-s} \frac {\dif s}{s}.
\end{align}
\end{lemma}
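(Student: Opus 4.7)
The plan is to derive the approximate functional equation by a standard Mellin contour-shift argument applied to the completed $L$-function. Since $\chi_{8p}$ is a primitive real \emph{even} character of conductor $8p$, the completed $L$-function $\Lambda(s,\chi_{8p}) := (8p/\pi)^{s/2}\Gamma(s/2)L(s,\chi_{8p})$ is entire and satisfies $\Lambda(s,\chi_{8p}) = \Lambda(1-s,\chi_{8p})$ with root number $+1$. Evaluated at the shifted point $s = \tfrac12 + w$, the gamma factor becomes $\Gamma(w/2 + 1/4)$, which matches the shape of the kernel $V(t)$ in \eqref{eq:Vdef}.

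I would start by considering
$$I := \frac{1}{2\pi i}\int_{(2)}\left(\frac{8p}{\pi}\right)^{w/2}\frac{\Gamma(w/2 + 1/4)}{\Gamma(1/4)}\,L\!\left(\tfrac12 + w,\, \chi_{8p}\right)\frac{dw}{w}.$$
On the line $\Re(w) = 2$ the Dirichlet series for $L(\tfrac12 + w, \chi_{8p})$ converges absolutely, and Stirling's formula gives enough decay of the gamma factor to allow termwise integration. Writing $(8p/\pi)^{w/2}\,n^{-w} = (8/\pi)^{w/2}(n/\sqrt{p})^{-w}$ and comparing with \eqref{eq:Vdef} yields $I = \sum_{n\ge 1}\chi_{8p}(n)\,n^{-1/2}\,V(n/\sqrt{p})$, which is the desired right-hand side up to the factor of $2$.

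Next I would shift the contour to $\Re(w) = -2$. The only pole crossed is the simple pole of $1/w$ at $w = 0$, with residue $L(\tfrac12, \chi_{8p})$; the trivial zero of $L(\tfrac12 + w, \chi_{8p})$ at $w = -1/2$ cancels the only pole of $\Gamma(w/2 + 1/4)$ inside the strip, and Stirling's bounds make the horizontal connecting segments negligible. This gives $I = L(\tfrac12, \chi_{8p}) + I'$, where $I'$ denotes the same integrand on $\Re(w) = -2$. In $I'$ I substitute $w \mapsto -w$ (flipping the contour orientation contributes an overall minus sign) and invoke the functional equation in the form
$$\left(\frac{8p}{\pi}\right)^{-u/2}\Gamma\!\left(\tfrac14 - \tfrac{u}{2}\right)L\!\left(\tfrac12 - u,\, \chi_{8p}\right) = \left(\frac{8p}{\pi}\right)^{u/2}\Gamma\!\left(\tfrac14 + \tfrac{u}{2}\right)L\!\left(\tfrac12 + u,\, \chi_{8p}\right),$$
thereby converting $I'$ into $-I$. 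Combining gives $2I = L(\tfrac12, \chi_{8p})$, which is exactly the claim.

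The argument is entirely standard; the only delicate points are identifying the root number as $+1$ (so that $I' = -I$ rather than $+I$, which would force $L(\tfrac12, \chi_{8p}) = 0$) and invoking Stirling's formula to justify both the termwise integration on $\Re(w) = 2$ and the negligibility of the horizontal segments in the contour shift. No substantive obstacle is anticipated.
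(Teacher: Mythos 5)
Your argument is correct and is the standard Mellin contour-shift derivation of the approximate functional equation for an even, real, primitive character. Note, however, that the paper does not actually prove this lemma; it simply cites \cite[Lemma 2.2]{sound1}, and the proof in that reference is precisely the one you have reconstructed (introduce the shifted integral $I$ on $\Re(w)=2$, expand the Dirichlet series termwise to identify $I$ with the $V$-sum, shift to $\Re(w)=-2$ picking up the residue $L(\tfrac12,\chi_{8p})$ at $w=0$, observe that the pole of $\Gamma(w/2+1/4)$ at $w=-\tfrac12$ is cancelled by the trivial zero of $L$, substitute $w\mapsto -w$, and apply the even functional equation $\Lambda(s,\chi_{8p})=\Lambda(1-s,\chi_{8p})$ with root number $+1$ to recognize $I'=-I$, hence $2I=L(\tfrac12,\chi_{8p})$). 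All the delicate points you flag — the cancellation of the $\Gamma$-pole by the trivial zero, the sign bookkeeping from the orientation flip and $1/w\mapsto -1/u$, and the Stirling bounds justifying termwise integration and the vanishing of the horizontal segments — are handled correctly. One small editorial remark: the lemma as printed in the paper contains typos ($\chi_{8d}$ and $V(n/\sqrt{d})$ should read $\chi_{8p}$ and $V(n/\sqrt{p})$); you silently corrected these, which is the right reading.
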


\subsection{Upper bound for $\log |L(1/2, \chi_{8p})|$ }

  We cite a result from \cite[Proposition 3]{Harper} on an upper bound of $\log |L(\half, \chi_{8p})|$ in terms of a sum over primes.
\begin{lemma}
\label{lem: logLbound}
Assume RH for $\zeta(s)$ and GRH for $L(s,\chi_{8p})$ for all odd primes $p$. We have for $x \geq 2$ and $p \leq X$ for a large number
$X$,
\begin{align}
\label{equ:3.3'}
\begin{split}
 & \log  \left| L(\tfrac{1}{2}, \chi_{8p}) \right| \leq  \sum_{\substack{  q \leq x }} \frac{\chi_{8p} (q)}{q^{1/2+1/\log x}}
 \frac{\log (x/q )}{\log x} + \frac{1}{2} \log \log x+\frac{\log X}{\log x} + O(1) .
\end{split}
 \end{align}
\end{lemma}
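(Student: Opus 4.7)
The plan is to follow Soundararajan's method from \cite{Sound2009}, as refined by Harper in \cite{Harper}. The strategy has two stages: first, relate $\log|L(1/2, \chi_{8p})|$ to $\log|L(\sigma_0, \chi_{8p})|$ for $\sigma_0$ slightly to the right of $1/2$ via the explicit formula, which produces the $\log X/\log x$ contribution; second, approximate $\log L(\sigma_0, \chi_{8p})$ by a smoothly truncated Dirichlet polynomial supported on prime powers, which yields the main prime sum together with the $\tfrac{1}{2}\log\log x$ contribution arising from the prime-square terms.

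For the first stage, with $\sigma_0 = 1/2 + 1/\log x$ I would begin from the elementary identity
\[
\log\left|L\left(\tfrac{1}{2}, \chi_{8p}\right)\right| = \log\left|L(\sigma_0, \chi_{8p})\right| - \int_{1/2}^{\sigma_0} \text{Re}\,\frac{L'}{L}(\sigma, \chi_{8p})\, d\sigma,
\]
and invoke the Hadamard product for $L(s, \chi_{8p})$ under GRH to write
\[
-\text{Re}\,\frac{L'}{L}(\sigma, \chi_{8p}) = \tfrac{1}{2}\log\frac{8p}{\pi} - \sum_\gamma \frac{\sigma - 1/2}{(\sigma - 1/2)^2 + \gamma^2} + O(1),
\]
where $\gamma$ ranges over the ordinates of nontrivial zeros and the $O(1)$ absorbs the gamma factor contribution. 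Dropping the nonnegative zero sum yields an upper bound, and integrating the remaining $\tfrac{1}{2}\log(8p/\pi)$ over the range of length $1/\log x$ produces the $\log X/\log x$ term in the lemma.

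For the second stage, I would start from $\log L(s, \chi_{8p}) = \sum_{n \geq 2} \Lambda(n)\chi_{8p}(n)/(n^s \log n)$ in the region of absolute convergence and apply a Perron-type contour shift against a kernel whose Mellin transform yields the weight $\log(x/n)/\log x$. Shifting the contour to the left of $\text{Re}(w)=0$ and using that $L(s, \chi_{8p})$ has no zeros with real part exceeding $1/2$ under GRH, one arrives at
\[
\log L(\sigma_0, \chi_{8p}) = \sum_{n \leq x} \frac{\Lambda(n)\chi_{8p}(n)}{n^{\sigma_0} \log n} \cdot \frac{\log(x/n)}{\log x} + O(1).
\]
Splitting this sum by $n = q$, $n = q^2$, and $n = q^k$ with $k \geq 3$: the prime terms form the main sum in the lemma; the prime squares contribute $\tfrac{1}{2}\sum_{q \leq \sqrt{x}} q^{-1 - 2/\log x} \cdot \log(x/q^2)/\log x$, which evaluates via partial summation and Lemma~\ref{RS} to $\tfrac{1}{2}\log\log x + O(1)$; and the $k \geq 3$ contributions are absolutely bounded.

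The main obstacle is ensuring that all error terms remain $O(1)$ uniformly in $p \leq X$ and $x \geq 2$, and that the constants $\tfrac{1}{2}$ on $\log\log x$ and $1$ on $\log X/\log x$ come out sharp. Soundararajan's original argument in \cite{Sound2009} yields instead a coefficient of shape $(1+\lambda)/\log x$ multiplying the logarithm of the conductor with some auxiliary parameter $\lambda > 0$. Harper's refinement drives $\lambda$ to zero through a careful optimization of the auxiliary parameters and a more delicate treatment of the shifted contour integral, which is what makes the statement of \cite[Proposition~3]{Harper} possible.
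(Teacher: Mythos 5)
The paper does not prove this lemma; it is quoted verbatim from Harper's work (cited as \cite[Proposition~3]{Harper}), so there is no in-paper argument to compare against, and I'll assess your sketch on its own terms.

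Your outline is in the right spirit (Soundararajan's method at $\sigma_0 = 1/2 + 1/\log x$, i.e.\ parameter $\lambda = 1$), but there is a concrete bookkeeping gap that makes your claimed derivation internally inconsistent. Integrating $\tfrac12\log(8p/\pi)$ over an interval of length $\sigma_0 - \tfrac12 = 1/\log x$ in your first stage yields only $\tfrac12\,\log X/\log x + O(1)$, not the full $\log X/\log x$ that you assert this stage produces. The missing half has to come from stage two, but your displayed approximation $\log L(\sigma_0,\chi_{8p}) = \sum_{n\le x}\Lambda(n)\chi_{8p}(n)n^{-\sigma_0}(\log n)^{-1}\log(x/n)/\log x + O(1)$ cannot hold with an $O(1)$ error uniformly in $p\le X$. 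The kernel $x^w/(w^2\log x)$ has a double pole at $w=0$, and the residue there is $\log L(\sigma_0,\chi_{8p}) + (\log x)^{-1}(L'/L)(\sigma_0,\chi_{8p})$; under GRH the second piece can be of size $\asymp \log X$. It is precisely this term, after bounding $-\mathrm{Re}\,(L'/L)(\sigma_0,\chi_{8p})\le \tfrac12\log(8p/\pi)+O(1)$ by the Hadamard formula and dropping the nonnegative zero sum, that supplies the second $\tfrac12\,\log X/\log x$. (One also cannot freely shift the contour for $\log L$ past $\mathrm{Re}(w)=-1/\log x$ because of the branch points at the nontrivial zeros; the cleaner route, following Soundararajan, is to apply the weighted explicit-formula identity to the meromorphic function $L'/L$ and then integrate in $\sigma$, keeping and combining the zero sums from both stages rather than simply discarding them in stage one.)

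Your closing remark also mischaracterizes Harper's refinement. In Soundararajan's proposition the coefficient on $\log(\text{conductor})/\log x$ is $(1+\lambda)/2$ with $\lambda \ge \lambda_0 \approx 0.49$, and the present lemma corresponds to $\lambda = 1$ (coefficient $1$); there is no driving $\lambda\to 0$, which would give coefficient $\tfrac12$, smaller than what is stated. Harper's contribution is not a sharpening of this pointwise inequality at all — it is the downstream combinatorial argument in which one applies the inequality simultaneously with a ladder of lengths $x = X^{\alpha_1}, X^{\alpha_2},\dots$ and treats the resulting short Dirichlet polynomials multiplicatively; this is what removes the stray $\varepsilon$ from the moment bound, and it is the mechanism actually used in Sections 3 and 4 of this paper.
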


\subsection{Smoothed character sums}
\label{smoothsum}

   For any positive odd integer $c$, we write $\psi_{c}$ for the Dirichlet character with $\psi_{c}(n) = \leg {n}{c} $ for $n \in \mz$. We also define $\delta_{c=\square}=1$ if $c$ is a perfect square and $\delta_{c=\square}=0$ otherwise.  In the remainder of the paper, let $\Phi$ denote a smooth, non-negative function compactly supported on $[1/2,5/2]$ satisfying $\Phi(x) =1$ for $x\in [1,2]$. The Mellin transform of $\Phi(x)$ is wirtten as ${\widehat \Phi}(s)$ and recall that for any complex number $s$,
\begin{equation*}
{\widehat \Phi}(s) = \int\limits_{0}^{\infty} \Phi(x)x^{s}\frac {\dif x}{x}.
\end{equation*}
   We have the following result on the smoothed quadratic character sums.
\begin{lemma}
\label{lemma logd}
Assume GRH. Let $c$ be a positive odd integer and $\Phi(X)$ be a smooth function fitting the above descriptions. Then for any $\varepsilon>0$,
\begin{equation} \label{wsum}
 \sum_{(p,2)=1} (\log p) \chi_{8p}(c) \Phi \left( \frac {p}X \right) =  \delta_{c=\square}\widehat{\Phi}(1)X+O \left( X^{1/2+\varepsilon}\log
 \log (c+2) \right).
 \end{equation}
\end{lemma}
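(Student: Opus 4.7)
The plan is to view $\chi_{8p}(c)$, as a function of the prime $p$, as the evaluation of a fixed Dirichlet character via quadratic reciprocity, and then to apply Mellin inversion together with GRH bounds on $L'/L$ to either evaluate or bound the resulting smoothed sum over primes.

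First I would use quadratic reciprocity. Writing $c = c_0 c_1^2$ with $c_0$ squarefree and odd, one checks for every odd prime $p$ coprime to $c$ that
\[
\chi_{8p}(c) = \leg{2}{c}\leg{c_0}{p}(-1)^{\frac{p-1}{2}\cdot\frac{c_0-1}{2}} = \epsilon_c\,\psi_c(p),
\]
where $\epsilon_c = \leg{2}{c} = \pm 1$ is a constant in $p$ and $\psi_c$ is a Dirichlet character in the variable $p$ of modulus dividing $4c_0$. Crucially, $\psi_c$ is principal exactly when $c_0 = 1$, i.e.\ when $c$ is a perfect (odd) square, in which case $\epsilon_c = 1$ as well; otherwise $\psi_c$ is a non-trivial primitive quadratic character of conductor $O(c_0)$. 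The terms with $p\mid c$ in the original sum contribute $0$ automatically, since $\chi_{8p}(c)=0$ on those primes.

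Next I would insert the Mellin representation $\Phi(p/X) = \tfrac{1}{2\pi i}\int_{(2)}\widehat{\Phi}(s)(X/p)^s\,ds$ and interchange summation and integration to obtain
\[
\sum_{(p,2)=1}(\log p)\chi_{8p}(c)\Phi\!\left(\tfrac{p}{X}\right) = \frac{\epsilon_c}{2\pi i}\int_{(2)}\widehat{\Phi}(s)X^s D(s)\,ds,
\]
where $D(s) := \sum_{(p,2c)=1}\psi_c(p)(\log p)p^{-s}$. The difference between $D(s)$ and $-L'(s,\psi_c)/L(s,\psi_c)$ consists only of the Euler factors at primes dividing $2c$ (a finite product bounded uniformly on $\operatorname{Re}(s)\ge\tfrac12+\varepsilon$) and a prime-power remainder which is holomorphic and absolutely bounded on the same half-plane. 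Shifting the contour to $\operatorname{Re}(s) = \tfrac{1}{2}+\varepsilon$ picks up the residue of $\widehat{\Phi}(s)X^s\zeta'/\zeta(s)$ at $s=1$ exactly when $\psi_c$ is principal, that is, when $c$ is a square, yielding the main term $\widehat{\Phi}(1)X$; in the non-square case $L(s,\psi_c)$ is entire and no main term arises.

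On the shifted contour, the assumed GRH furnishes the standard Selberg/Littlewood-style estimate $|L'(s,\psi_c)/L(s,\psi_c)| \ll \log\log(c_0(|t|+2))$ for $\operatorname{Re}(s) = \tfrac{1}{2}+\varepsilon$, while $\widehat{\Phi}(s)$ decays faster than any polynomial in $|t|$ because $\Phi$ is smooth and compactly supported. Combining these two ingredients bounds the shifted integral by $O(X^{\frac{1}{2}+\varepsilon}\log\log(c+2))$, giving the claimed error term. The main obstacle is securing the $\log\log(c+2)$ dependence rather than a full power of $\log c$: this requires the refined GRH estimate on $\log L$ (and hence $L'/L$) just to the right of the critical line instead of the weaker convexity bound. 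The character-theoretic bookkeeping in the reciprocity step, though it must be done carefully to ensure that $\psi_c$ is principal \emph{precisely} when $c$ is a square and that its conductor is controlled by $c_0$, is otherwise routine.
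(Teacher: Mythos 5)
Your proof is correct and takes essentially the same route as the paper's: Mellin inversion, a contour shift to $\Re(s)=\tfrac12+\varepsilon$ that picks up the pole at $s=1$ precisely when $c$ is a square, and the GRH bound $L'/L\ll\log\log$ on the shifted line. The only cosmetic difference is that you invoke quadratic reciprocity to turn $\chi_{8p}(c)$ into a character in $p$, whereas the paper sidesteps this by simply factoring $\chi_{8p}(c)=\chi_8(c)\psi_c(p)$ with $\psi_c=\leg{\cdot}{c}$ already a Dirichlet character modulo $c$, and the paper converts $\log p$ to $\Lambda(n)$ (absorbing prime powers into an $O(X^{1/2+\varepsilon})$ error) before Mellin inversion rather than afterwards inside the Dirichlet series.
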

\begin{proof}
  Due to the compact support of $\Phi$, we get
\begin{align*}
 \sum_{(p,2)=1} (\log p) \chi_{8p}(c) \Phi \left( \frac {p}X \right) =& \sum_{n}  \chi_{8n}(c) \Lambda(n) \Phi \left( \frac {n}X \right)
 +O \left(\sum_{\substack{p^j \leq X^{1+\varepsilon}, \ j \geq 2 }} (\log p)\Phi \left( \frac {p^j}X \right)   \right ).
\end{align*}

Now Lemma \ref{RS} gives
\begin{align}
\label{ppowerest}
 \sum_{\substack{p^j \leq X^{1+\varepsilon}, \ j \geq 2}}(\log p)\Phi \left( \frac {p^j}X \right) \ll
 X^{\varepsilon}\sum_{\substack{p \leq X^{1/2+\varepsilon}}} \log p \ll X^{1/2+\varepsilon}.
\end{align}

   Next, we apply Mellin inversion to obtain that
\begin{align*}
\sum_{n}  \chi_{8n}(c) \Lambda(n) \Phi \left( \frac {n}X \right)
=& -\frac {\chi_8(c)}{2\pi i}\int\limits_{(2)} \frac {L'(s, \psi_c)}{L(s, \psi_c)} \widehat{\Phi}(s)X^s \dif s.
\end{align*}
Recall here that $\psi_c$ denotes the Dirichlet character $\leg {n}{c} $ for $n \in \mz$. \newline

  We evaluate the above integral by shifting the line of integration to $\Re(s)=1/2+\varepsilon$.  There is a pole at $s=1$ with residue
  $-\widehat{\Phi}(1)X$ only if $c$ is a perfect square.  The integration on the line $\Re(s)=1/2+\varepsilon$ can be estimated as $O(X^{1/2+\varepsilon}\log \log (c+2))$ by using the rapid decay of $\widehat{\Phi}$ on the vertical line and the estimate (see
  \cite[Theorem 5.17]{iwakow}) that under GRH, we have for $\Re(s) \geq 1/2+\varepsilon$,
\begin{align}
\label{Lderbound}
  \frac {L'(s, \psi_c)}{L(s, \psi_c)}  \ll \log\log \big ((c+2)(1+|s|)\big).
\end{align}
  The expression given in \eqref{wsum} now follows.
\end{proof}

\section{Proof of Theorem \ref{thmlowerbound}}
\label{sec 2'}

\subsection{The lower bound principle}

    We assume that $X$ is large throughout the proof.  Upon dividing $q$  into dyadic
    blocks and replacing $k$ by $2k$, we see that in order to prove Theorem \ref{thmlowerbound}, it suffices to show that for $k \geq 0$,
\begin{align*}
   \sum_{(p,2)=1}(\log p)L(\tfrac{1}{2},\chi_{8p})^{2k}\Phi\left( \frac pX \right) \gg X(\log X)^{k(2k+1)},
\end{align*}
  where $\Phi$ is given in Section \ref{smoothsum}. \newline

   As the case $k=0$ is trivial and the case $k=1/2$ above essentially follows from \cite[Theorem 3]{Jutila} (with minor changes in the proof) due to M. Jutila, we shall assume that $0<k \neq 1/2$ throughout the proof.  We also note that in remainder of the paper, unless otherwise specified, the implicit constants involved in estimations using $\ll$ or the big-$O$ notations depend on $k$ only and are uniform with respect to $\chi$. We further recall the usual convention that the empty product is defined to be $1$. \newline

   We follow the ideas of A. J. Harper in \cite{Harper} to define for a large number $M$ depending on $k$ only,
$$ \alpha_{0} = \frac{\log 2}{\log X}, \;\;\;\;\; \alpha_{i} = \frac{20^{i-1}}{(\log\log X)^{2}} \;\;\; \mbox{for all} \; i \geq 1, \quad
\mathcal{I} = \mathcal{I}_{k,X} = 1 + \max\{i : \alpha_{i} \leq 10^{-M} \} . $$

The above notations and Lemma \ref{RS} yield that that for $X$ large enough,
\begin{align}
\label{sump1}
 \mathcal{I} \leq \log\log\log X , \;\;\;\;\; \alpha_{1} = \frac{1}{(\log\log X)^{2}} , \;\;\;\;\; \sum_{p \leq X^{1/(\log\log X)^{2}}}
\frac{1}{p} \leq \log\log X.
\end{align}

  Also, for $1 \leq i \leq \mathcal{I}-1$ and $X$ large enough,
\begin{align}
\label{sumpj}
\mathcal{I}-i \leq \frac{\log(1/\alpha_{i})}{\log 20} , \;\;\;\;\; \sum_{X^{\alpha_{i}} < p \leq X^{\alpha_{i+1}}} \frac{1}{p}
 = \log \alpha_{i+1} - \log \alpha_{i} + o(1) = \log 20 + o(1) \leq 10.
\end{align}

  Combining \eqref{sump1} and \eqref{sumpj}, we obtain
\begin{align*}
   \sum_{X^{\alpha_{i-1}} < p \leq X^{\alpha_{i}}}\frac 1{p} \leq \frac{100}{10^{3M/4}}\alpha^{-3/4}_i, \quad 1\leq i  \leq \mathcal{I}.
\end{align*}

For any real numbers $x$, $y$ with $y \geq 0$, we set
\begin{align} \label{E}
  E_{y}(x) = \sum_{j=0}^{2\lceil y \rceil} \frac {x^{j}}{j!}.
\end{align}
  We then define for any real number $\alpha$ and any $1\leq i  \leq \mathcal{I}$,
\begin{align*}
 {\mathcal P}_i(p)=&  \sum_{X^{\alpha_{i-1}} < q \leq X^{\alpha_{i}}}  \frac{\chi_{8p} (q)}{\sqrt{q}}, \quad {\mathcal N}_i(p, \alpha) = E_{e^2k\alpha^{-3/4}_i} \Big (\alpha {\mathcal P}_i(p) \Big ), \quad  {\mathcal N}(p, \alpha)=  \prod^{\mathcal{I}}_{i=1} {\mathcal N}_i(p, \alpha).
\end{align*}

  Note that each ${\mathcal N}_i(p,\alpha)$ is a short Dirichlet polynomial of length at most $X^{2\alpha_{i}\lceil e^2k\alpha^{-3/4}_i \rceil}$. By taking $X$ large enough, we have that
\begin{align}
\label{exponentbound}
 \sum^{\mathcal{I}}_{i=1} 2\alpha_{i}\lceil e^2k\alpha^{-3/4}_i \rceil \leq \lceil  4e^2k10^{-M/4}\rceil.
\end{align}
   It follows that ${\mathcal N}(p, \alpha)$ is also a short Dirichlet polynomial of length at most $X^{\lceil  4e^2k10^{-M/4}\rceil}$. \newline

  In the proof of Theorem \ref{thmlowerbound}, we need the following bounds for expressions involving with various  ${\mathcal N}(p, \alpha)$ .
\begin{lemma}
\label{lemNbounds}
 With the notations as above, we have for $0<k<1/2$ and $1 \leq i \leq \mathcal{I}$,
\begin{align}
\label{est0}
\begin{split}
{\mathcal N}_i(p, 2k-1)^{\frac {2(2-3k)}{1-2k}} {\mathcal N}_i(p, 2-2k)^{2} \le
{\mathcal N}_i(p, 2k)  \left( 1+ O\big(e^{-e^2k\alpha^{-3/4}_i} \big)
\right) + {\mathcal Q}^{r_k}_i(p, k).
\end{split}
\end{align}

  We also have  for $k >1/2$ and $1 \leq i \leq \mathcal{I}$,
\begin{align}
\label{est0'}
\begin{split}
\mathcal{N}_i(p,
 2k-1)^{\frac {2k}{2k-1}}  \le
{\mathcal N}_i(p, 2k)  \left( 1+ O\big(e^{-e^2k\alpha^{-3/4}_i} \big)
\right) + {\mathcal Q}^{r_k}_i(p, k).
\end{split}
\end{align}
  Here the implied constants in \eqref{est0} and \eqref{est0'} are absolute, and we define
$$
{\mathcal Q}_i(p,k) =\left( \frac{12 \max (9, 36k^2 ) {\mathcal P}_i(p)}{\lceil e^2k\alpha^{-3/4}_i \rceil} \right)^{ 2\lceil e^2k\alpha^{-3/4}_i \rceil},
$$
  with $r_k=3+\lceil 2(2-3k)/(1-2k) \rceil$ for $0<k<1/2$ and $r_k=1+\lceil 2k/(2k-1) \rceil$ for $k>1/2$.
\end{lemma}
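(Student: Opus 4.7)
The plan is to dichotomize based on the size of ${\mathcal P}_i(p)$ relative to $\ell_i := \lceil e^2 k \alpha_i^{-3/4} \rceil$. Let $A_k$ be a constant depending on the maximum of the exponents $\{|2k-1|, |2-2k|, |2k|\}$ and on the factor $12\max(9, 36k^2)$ appearing in ${\mathcal Q}_i$. Call $p$ \emph{good for level} $i$ if $|{\mathcal P}_i(p)| \leq \ell_i/A_k$, and \emph{bad} otherwise. On the bad set one has ${\mathcal Q}_i(p,k) \geq 1$ (provided $A_k \leq 12\max(9,36k^2)$), so absorbing the left-hand side of \eqref{est0}--\eqref{est0'} into ${\mathcal Q}_i^{r_k}$ is feasible there, whereas on the good set the truncated exponential is essentially the genuine exponential.

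In the good case, Stirling applied to the tail $\sum_{j>2\lceil y\rceil} z^j/j!$ yields the standard comparison
\begin{align*}
E_y(z) = e^z \left(1 + O(e^{-y})\right), \qquad E_y(z) > 0, \quad |z| \leq y/A_k.
\end{align*}
Apply this with $z = \alpha\,{\mathcal P}_i(p)$ for $\alpha \in \{2k-1,\, 2-2k,\, 2k\}$ when $0<k<1/2$, and for $\alpha \in \{2k-1,\, 2k\}$ when $k>1/2$. It then suffices to verify the algebraic identities
\begin{align*}
(2k-1)\cdot\tfrac{2(2-3k)}{1-2k} + (2-2k)\cdot 2 = 2k, \qquad (2k-1)\cdot\tfrac{2k}{2k-1} = 2k,
\end{align*}
after which the left-hand sides of \eqref{est0} and \eqref{est0'} collapse to $e^{2k{\mathcal P}_i(p)}\left(1 + O(e^{-\ell_i})\right) = {\mathcal N}_i(p,2k)\left(1 + O(e^{-\ell_i})\right)$, as required. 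Positivity of ${\mathcal N}_i$, needed to make sense of the non-integer powers on the left, is a byproduct of this approximation.

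In the bad case one estimates crudely. Since the terms of $E_y(\alpha{\mathcal P}_i)$ increase monotonically up to degree $2\ell_i$ once $|\alpha{\mathcal P}_i| \geq \ell_i$, Stirling gives
\begin{align*}
|{\mathcal N}_i(p,\alpha)| \leq (2\ell_i+1)\left(\frac{e|\alpha|\,|{\mathcal P}_i(p)|}{2\ell_i}\right)^{2\ell_i}.
\end{align*}
Raising this inequality to the total exponent of ${\mathcal N}_i$ that appears on the left-hand side of \eqref{est0} or \eqref{est0'}---which is bounded by $r_k$ in view of the definition of $r_k$---and absorbing the harmless factors $e|\alpha|$ and the polynomial prefactor $(2\ell_i+1)^{r_k}$ into the numerical constant $12\max(9,36k^2)$ produces the bound by ${\mathcal Q}_i^{r_k}(p,k)$.

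The hard part will be the uniform bookkeeping that makes the single constant $12\max(9,36k^2)$ and the single integer exponent $r_k$ swallow all contributions simultaneously; this is most delicate as $k \to 1/2$, where the exponents $2(2-3k)/(1-2k)$ and $2k/(2k-1)$ blow up, which is exactly why the ceilings appear in $r_k$ and why the value $k=1/2$ is excluded. A secondary subtlety is the interpretation of ${\mathcal N}_i^{\beta}$ for non-integer $\beta$: this is unambiguous in the good case because ${\mathcal N}_i > 0$, and in the bad case one works throughout with $|{\mathcal N}_i|^{\beta}$, under which the bound by ${\mathcal Q}_i^{r_k}$ still holds.
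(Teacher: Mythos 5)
Your overall strategy is the same as the paper's: split into a ``good'' regime where $|{\mathcal P}_i(p)|$ is small compared to $\ell_i=\lceil e^2k\alpha_i^{-3/4}\rceil$ and the truncated exponential is essentially the genuine one, and a ``bad'' regime where one bounds ${\mathcal N}_i$ crudely by ${\mathcal Q}_i^{r_k}$. The good case, the exponent-matching identities $(2k-1)\cdot\tfrac{2(2-3k)}{1-2k}+(2-2k)\cdot2=2k$ and $(2k-1)\cdot\tfrac{2k}{2k-1}=2k$, and the observation that ${\mathcal Q}_i\geq1$ on the bad set are all in accord with the paper.

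The bad case, however, contains a genuine error. The claim that ``the terms of $E_y(\alpha{\mathcal P}_i)$ increase monotonically up to degree $2\ell_i$ once $|\alpha{\mathcal P}_i|\geq\ell_i$'' is false: the ratio of consecutive terms is $|\alpha{\mathcal P}_i|/r$, so monotonic increase up to $r=2\ell_i$ requires $|\alpha{\mathcal P}_i|\geq2\ell_i$, not $\geq\ell_i$. As a consequence, the asserted bound
$$|{\mathcal N}_i(p,\alpha)|\leq(2\ell_i+1)\left(\frac{e|\alpha|\,|{\mathcal P}_i(p)|}{2\ell_i}\right)^{2\ell_i}$$
fails whenever $\ell_i\lesssim|\alpha{\mathcal P}_i|<2\ell_i$. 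Taking $|\alpha{\mathcal P}_i|=\ell_i$ as a concrete test, the left side is $\asymp e^{\ell_i}$, while the right side is $(2\ell_i+1)(e/2)^{2\ell_i}=(2\ell_i+1)e^{(2-2\log2)\ell_i}$, and $2-2\log2\approx0.61<1$, so the bound is false for $\ell_i$ large. This regime genuinely occurs in the bad case you define: you take the bad set to be $|{\mathcal P}_i(p)|>\ell_i/A_k$ with $A_k$ large enough (on the order of $10$ or more) to make the good-case approximation $E_y(z)=e^z(1+O(e^{-y}))$ valid, yet $|\alpha|$ can be arbitrarily small (e.g.\ $\alpha=2k-1$ with $k$ near $\tfrac12$), so $|\alpha{\mathcal P}_i|$ can fall well below $2\ell_i$. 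The key repair---and what the paper actually does---is that the crude bound must be in terms of $|{\mathcal P}_i|$ alone, not $|\alpha{\mathcal P}_i|$: one majorizes $|\alpha|$ by $\alpha'+1$, writes $\sum_{r\leq2\ell_i}\frac{|(\alpha'+1){\mathcal P}_i|^r}{r!}=|(\alpha'+1){\mathcal P}_i|^{2\ell_i}\sum_{r\leq2\ell_i}\frac{|(\alpha'+1){\mathcal P}_i|^{r-2\ell_i}}{r!}$, and uses the lower bound $(\alpha'+1)|{\mathcal P}_i|\geq\ell_i/10$ on the bad set to control the negative powers by a convergent series, then applies Stirling to $(2\ell_i)!$, yielding $\bigl(12(\alpha'+1)^2|{\mathcal P}_i|/\ell_i\bigr)^{2\ell_i}$. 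This bound, unlike yours, does not degenerate as $|\alpha|\to0$. Finally, positivity of ${\mathcal N}_i$ needed for the non-integer powers holds unconditionally (the even-degree truncated exponential $\sum_{j=0}^{2n}x^j/j!$ is strictly positive for all real $x$), so there is no need for the workaround with absolute values that you mention at the end.
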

\begin{proof}
  As in the proof of \cite[Lemma 3.4]{Gao2021-3}, we have for $|z| \le aK/20$ with $0<a \leq 2$,
\begin{align}
\label{Ebound}
\Big| \sum_{r=0}^K \frac{z^r}{r!} - e^z \Big| \le \frac{|z|^{K}}{K!} \le \Big(\frac{a e}{20}\Big)^{K}.
\end{align}
  By taking $z=\alpha {\mathcal P}_i(p), K=2\lceil e^2k\alpha^{-3/4}_i \rceil$ and $a=\min (|\alpha|, 2 )$ in \eqref{Ebound}, we see that for any $\alpha' \geq |\alpha|$, if
  \[ |{\mathcal P}_i(p)| \le \lceil e^2k\alpha^{-3/4}_i \rceil/(10(1+\alpha')), \]
  then
\begin{align*}
{\mathcal N}_i(p, \alpha)=& \exp ( \alpha{\mathcal P}_i(p) )\left( 1+  O \left( \exp ( |\alpha {\mathcal P}_i(p)| ) \left( \frac{a e}{20} \right)^{2 e^2k\alpha^{-3/4}_i} \right) \right )\\
= & \exp ( \alpha {\mathcal P}_i(p)  ) \left( 1+  O\left( a e^{-e^2k\alpha^{-3/4}_i} \right) \right).
\end{align*}

  We apply the above estimates to ${\mathcal N}_i(p, 2k-1), \ {\mathcal N}_i(p, 2-2k)$ and ${\mathcal N}_i(p, 2k)$ by taking $\alpha'=2$ above to see that when $0<k<1/2$ and  $|{\mathcal P}_i(p)| \le \lceil e^2k\alpha^{-3/4}_i \rceil/30$, then
\begin{align} \label{est1}
\begin{split}
{\mathcal N}_i(p, 2k-1)^{\frac {2(2-3k)}{1-2k}} {\mathcal N}_i(p, 2-2k)^{2}
=& \exp ( 2k  {\mathcal P}_i(p)  ) \left( 1+ O\big( e^{-e^2k\alpha^{-3/4}_i} \big) \right) \\
=& {\mathcal N}_i(p, 2k) \left( 1+ O\big(e^{-e^2k\alpha^{-3/4}_i} \big)
\right) .
\end{split}
\end{align}

  Similarly, by taking $\alpha'=2k-1$, we see that when $k>1/2$ and $|{\mathcal P}_i(p)| \le \lceil e^2k\alpha^{-3/4}_i \rceil/(20k)$, then
\begin{align}
\label{est1'}
\begin{split}
\mathcal{N}_i(p,
 2k-1)^{\frac {2k}{2k-1}}  \le
{\mathcal N}_i(p, 2k)  \left( 1+ O\big(e^{-e^2k\alpha^{-3/4}_i} \big)
\right).
\end{split}
\end{align}

 On the other hand, when $|{\mathcal P}_i(p)| \ge \lceil e^2k\alpha^{-3/4}_i \rceil/(10(1+\alpha'))$, we have that
\begin{align}
\label{4.2}
\begin{split}
{\mathcal N}_i(p, \alpha) \le \sum_{r=0}^{2\lceil e^2k\alpha^{-3/4}_i \rceil} \frac{|\alpha{\mathcal P}_i(p)|^r}{r!} & \le
|(\alpha'+1) {\mathcal P}_i(p)|^{2\lceil e^2k\alpha^{-3/4}_i \rceil} \sum_{r=0}^{2\lceil e^2k\alpha^{-3/4}_i \rceil} \Big( \frac{10(1+\alpha')}{2\lceil e^2k\alpha^{-3/4}_i \rceil}\Big)^{\lceil e^2k\alpha^{-3/4}_i \rceil-r} \frac{1}{r!}  \\
&   \le \Big( \frac{12(\alpha'+1)^2 |{\mathcal
P}_i(p)|}{\lceil e^2k\alpha^{-3/4}_i \rceil}\Big)^{2\lceil e^2k\alpha^{-3/4}_i \rceil} .
\end{split}
\end{align}

The last expression in \eqref{4.2} enables us to deduce that when $0<k<1/2$ and $|{\mathcal P}_i(p)| \ge \lceil e^2k\alpha^{-3/4}_i \rceil/30$,
\begin{align}
\label{est2}
\begin{split}
{\mathcal N}_i(p, 2k-1)^{\frac {2(2-3k)}{1-2k}} {\mathcal N}_i(p, 2-2k)^{2} \leq
{\mathcal Q}^{r_k}_i(p,k).
\end{split}
\end{align}

   Moreover, we set $\alpha'=2k-1$ in \eqref{4.2} to deduce that when $k>1/2$ and $|{\mathcal P}_i(p)| \ge \lceil e^2k\alpha^{-3/4}_i \rceil/(20k)$,
\begin{align}
\label{est2'}
\begin{split}
 \mathcal{N}_i(p,
 2k-1)^{\frac {2k}{2k-1}}  \le {\mathcal Q}^{r_k}_i(p,k).
\end{split}
\end{align}

   The assertion of the lemma now follows from \eqref{est1}, \eqref{est1'}, \eqref{est2} and \eqref{est2'}.
\end{proof}

  Next, we state the needed lower bounds principle of W. Heap and K. Soundararajan in
  \cite{H&Sound} for our situation.
\begin{lemma}
\label{lem1}
 With notations as above, we have for $0<k<1/2$ and $c=(2/k-3)^{-1}$,
\begin{align}
\label{basicboundkbig}
\begin{split}
  \sum_{(p,2)=1} & (\log p)  L \left( \frac{1}{2},\chi_{8p} \right)  \mathcal{N}(p, 2k-1) \Phi\leg{p}{X}  \\
 \ll &\left( \sum_{(p,2)=1}(\log p)L \left( \frac{1}{2}, \chi_{8p} \right)^{2k} \Phi \left( \frac pX \right) \right)^{c/(2k)} \left( \sum_{(p,2)=1}(\log p)L \left( \frac{1}{2}, \chi_{8p} \right)^2 \mathcal{N}(p, 2k-2)\Phi \left( \frac pX \right) \right)^{(1-c)/2} \\
 & \hspace*{2cm} \times \left(
 \sum_{(p,2)=1}(\log p) \prod^{{\mathcal I}}_{i=1} \big ( {\mathcal N}_i(p, 2k)+ {\mathcal Q}^{r_k}_i(p,k) \big )\Phi\leg{p}{X}  \right)^{(1+c)/2-c/(2k)}.
\end{split}
\end{align}
  For $k > 1/2$, we have
\begin{align}
\label{basicboundkbig1'}
\begin{split}
 \sum_{(p,2)=1} (\log p) & L \left( \frac{1}{2}, \chi_{8p} \right) \mathcal{N}(p,2k-1)  \Phi \left( \frac pX \right)  \\
 \leq & \Big ( \sum_{(p,2)=1}(\log p)L \left( \frac{1}{2}, \chi_{8p} \right)^{2k} \Phi \left( \frac pX \right)\Big )^{1/(2k)}\Big (\sum_{(p,2)=1}(\log p) \prod^{{\mathcal I}}_{i=1} \big ( {\mathcal N}_i(p, 2k)+ {\mathcal Q}^{r_k}_i(p,k) \big )\Phi \left( \frac pX \right) \Big)^{1-1/(2k)}.
\end{split}
\end{align}
\end{lemma}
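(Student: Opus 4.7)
The strategy, due to W.~Heap and K.~Soundararajan, is to isolate the twist $\mathcal{N}(p,2k-1)$, split the sum by H\"older's inequality, and then use the pointwise bounds in Lemma~\ref{lemNbounds} to reshape the resulting $\mathcal{N}$-moment into the form of the right-hand side. Throughout, $(\log p)\Phi(p/X)$ is treated as a non-negative weight, and all sums are taken over odd primes $p$; absolute values on the various factors are inserted implicitly where needed (so that non-integer powers make sense), exactly as in the proof of Lemma~\ref{lemNbounds}.

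For $k>1/2$, I would apply H\"older's inequality with conjugate exponents $2k$ and $2k/(2k-1)$ to obtain
\begin{align*}
\sum (\log p)\,L(\tfrac{1}{2},\chi_{8p})\,\mathcal{N}(p,2k-1)\,\Phi\!\left(\tfrac{p}{X}\right)
\le \Bigl(\sum (\log p)L^{2k}\Phi\Bigr)^{\!1/(2k)}
\Bigl(\sum (\log p)\mathcal{N}(p,2k-1)^{2k/(2k-1)}\Phi\Bigr)^{\!1-1/(2k)}.
\end{align*}
Since $\mathcal{N}(p,2k-1)=\prod_{i=1}^{\mathcal{I}}\mathcal{N}_i(p,2k-1)$, applying \eqref{est0'} to each local factor and multiplying across $i$ yields the pointwise bound $\mathcal{N}(p,2k-1)^{2k/(2k-1)} \leq C\prod_i[\mathcal{N}_i(p,2k)+\mathcal{Q}_i^{r_k}(p,k)]$, where the constant $C$ comes from $\prod_i(1+O(e^{-e^2k\alpha_i^{-3/4}}))=O(1)$, uniformly in $p$, thanks to the super-geometric decay of $e^{-e^2k\alpha_i^{-3/4}}$. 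Substituting gives \eqref{basicboundkbig1'}.

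For $0<k<1/2$, set $c=k/(2-3k)$ and introduce H\"older exponents $p_1,p_2,p_3$ with $1/p_1=c/(2k)$, $1/p_2=(1-c)/2$, and $1/p_3=(1+c)/2-c/(2k)$. A direct calculation gives $p_3=2(2-3k)/(1-2k)$, $p_3/p_2=2$, and $1/p_1+1/p_2+1/p_3=1$. I then use the factorization
\begin{align*}
L(\tfrac{1}{2},\chi_{8p})\,\mathcal{N}(p,2k-1)
= L^{c}\cdot\bigl[L^{2}\mathcal{N}(p,2k-2)\bigr]^{1/p_2}\cdot\bigl[\mathcal{N}(p,2k-1)\,\mathcal{N}(p,2k-2)^{-1/p_2}\bigr]
\end{align*}
and apply three-fold H\"older with exponents $(p_1,p_2,p_3)$. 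The first two factors contribute exactly the first two sums on the right of \eqref{basicboundkbig}, so the lemma reduces to the pointwise estimate
\begin{align*}
\mathcal{N}(p,2k-1)^{p_3}\,\mathcal{N}(p,2k-2)^{-p_3/p_2}\ll \prod_{i=1}^{\mathcal{I}}\bigl[\mathcal{N}_i(p,2k)+\mathcal{Q}_i^{r_k}(p,k)\bigr].
\end{align*}
Using $p_3/p_2=2$ together with the approximate reciprocity $\mathcal{N}_i(p,2-2k)\,\mathcal{N}_i(p,2k-2)=1+(\text{error})$ (both factors being truncated exponentials of $\pm(2-2k)\mathcal{P}_i(p)$), this is exactly the product over $i$ of \eqref{est0}.

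The chief technical obstacle I expect is justifying this reciprocity in the second case. I would treat it by the same dichotomy on $|\mathcal{P}_i(p)|$ that drives the proof of Lemma~\ref{lemNbounds}: when $|\mathcal{P}_i(p)|$ is small, the truncated Taylor series in $\pm(2-2k)\mathcal{P}_i(p)$ almost cancel, with acceptable multiplicative error absorbed into the $(1+O(e^{-e^2k\alpha_i^{-3/4}}))$ factor of \eqref{est0}; when $|\mathcal{P}_i(p)|$ is large, the $\mathcal{Q}_i^{r_k}(p,k)$ summand on the right of \eqref{est0} dominates both $\mathcal{N}_i(p,2k-1)$ and the reciprocity defect. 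The verifications that $1/p_1+1/p_2+1/p_3=1$ and $p_3/p_2=2$, together with the convergence of $\sum_i e^{-e^2k\alpha_i^{-3/4}}$, are routine algebraic checks.
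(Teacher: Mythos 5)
Your $k>1/2$ argument matches the paper's exactly: H\"older with exponents $2k$ and $2k/(2k-1)$, then \eqref{est0'} applied locally with the $\prod_i(1+O(e^{-e^2k\alpha_i^{-3/4}}))$ factor absorbed into the implied constant.

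For $0<k<1/2$ you have the right H\"older exponents and the right endgame (reduce to \eqref{est0}), but there is a genuine gap in how you handle the negative power $\mathcal{N}(p,2k-2)^{-1/p_2}$. The paper does not face this difficulty at all, because it first invokes the exact pointwise inequality $\mathcal{N}(p,\alpha)\mathcal{N}(p,-\alpha)\ge 1$ (equation \eqref{Nprodbound}, valid because each $\mathcal{N}_i$ is a positive, even-degree truncation of $e^x$) to replace $\mathcal{N}(p,2k-2)^{-(1-c)/2}$ by $\mathcal{N}(p,2-2k)^{(1-c)/2}$ \emph{before} applying H\"older. After that, the third H\"older factor is $\mathcal{N}(p,2k-1)^{2(2-3k)/(1-2k)}\mathcal{N}(p,2-2k)^{2}$, which is precisely what \eqref{est0} is built to bound, with no approximation step needed.

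You instead apply H\"older first and are left with $\mathcal{N}(p,2k-1)^{p_3}\mathcal{N}(p,2k-2)^{-2}$, which you then try to convert to $\mathcal{N}(p,2k-1)^{p_3}\mathcal{N}(p,2-2k)^{2}$ via an ``approximate reciprocity'' $\mathcal{N}_i(p,2k-2)\mathcal{N}_i(p,2-2k)=1+\text{error}$ and a dichotomy on $|\mathcal{P}_i(p)|$. The small-$|\mathcal{P}_i(p)|$ branch can be made to work via \eqref{Ebound}, but the large-$|\mathcal{P}_i(p)|$ branch is where the argument breaks down: you assert that $\mathcal{Q}_i^{r_k}$ dominates ``the reciprocity defect,'' but what you actually need there is a bound on $\mathcal{N}_i(p,2k-2)^{-2}$ itself, and it is not clear a priori that $\mathcal{N}_i(p,2k-2)$ stays bounded away from zero throughout the intermediate range $|\mathcal{P}_i(p)|\asymp\lceil e^2k\alpha_i^{-3/4}\rceil$ (even truncations of $e^x$ attain small positive minima on the negative axis, and the size of that minimum as a function of the truncation degree is not something your sketch controls). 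The clean way to supply this missing bound is precisely $\mathcal{N}_i(p,2k-2)^{-1}\le\mathcal{N}_i(p,2-2k)$, i.e.\ \eqref{Nprodbound} again, combined with the bound $\mathcal{N}_i(p,2-2k)\le\mathcal{Q}_i(p,k)$ from \eqref{4.2}. In short, your argument silently requires the very inequality the paper uses up front; once you state and use $\mathcal{N}(p,\alpha)\mathcal{N}(p,-\alpha)\ge1$, it is simplest to apply it before H\"older as the paper does, which avoids the dichotomy on the reciprocity defect altogether.
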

\begin{proof}
  We note from its definition $E_{\ell}(x)$ is an even degree polynomial for any positive integer $\ell$. We then proceed as in \cite[Section 2]{Gao2021-3} to get that for any real number $\alpha$,
\begin{align} \label{Nprodbound}
 \mathcal{N}(p, \alpha)\mathcal{N}(p, -\alpha)  \geq 1.
\end{align}
 We deduce from \eqref{Nprodbound} that
\begin{align*}
\begin{split}
 \sum_{(p,2)=1}(\log p) & L \left( \frac{1}{2}, \chi_{8p} \right) \mathcal{N}(p,2k-1)  \Phi \left( \frac pX \right)  \\
 \leq & \sum_{(p,2)=1}(\log p)L \left( \frac{1}{2}, \chi_{8p} \right)^{c}\cdot L \left( \frac{1}{2}, \chi_{8p} \right)^{1-c} \mathcal{N}(p, 2k-2)^{(1-c)/2}  \cdot
 \mathcal{N}(p,2k-1) \mathcal{N}(p, 2-2k)^{(1-c)/2} \Phi \left( \frac pX \right).
\end{split}
\end{align*}

  We now apply H\"older's inequality with exponents $2k/c, 2/(1-c), ((1+c)/2-c/(2k))^{-1}$ to the
  last sum above.   We easily confirm, using the definition of $c$, that these exponents are all at least $1$. This leads to
\begin{align} \label{basicbound1}
\begin{split}
\sum_{(p,2)=1} (\log p) & L \left( \frac{1}{2}, \chi_{8p} \right) \mathcal{N}(p, 2k-1)   \Phi \left( \frac pX \right) \\
 \leq & \left( \sum_{(p,2)=1}(\log p)L \left( \frac{1}{2}, \chi_{8p} \right)^{2k} \Phi \left( \frac pX \right)\right)^{c/(2k)} \left( \sum_{(p,2)=1}(\log p)L \left( \frac{1}{2}, \chi_{8p} \right)^2 \mathcal{N}(p, 2k-2)\Phi \left( \frac pX \right) \right)^{(1-c)/2} \\
 & \hspace*{2cm} \times \left(\sum_{(p,2)=1}(\log p) \mathcal{N}(p, 2k-1)^{\frac {2(2-3k)}{1-2k}}\mathcal{N}(p, 2-2k)^{2} \Phi \left( \frac pX \right)
 \right)^{(1+c)/2-c/(2k)}.
\end{split}
\end{align}
The estimation given in \eqref{basicboundkbig} then follows from the above by applying the estimation in \eqref{est0} in the last sum of \eqref{basicbound1}. \newline

 Similarly, when $k > 1/2$, H\"older's inequality with the exponents $2k$, $2k/(2k-1)$ yeilds
\begin{align} \label{basicbound2}
\begin{split}
 \sum_{(p,2)=1}(\log p) & L \left( \frac{1}{2}, \chi_{8p} \right) \mathcal{N}(p,2k-1)  \Phi \left( \frac pX \right)  \leq \sum_{(p,2)=1}(\log p) \left| L \left( \frac{1}{2}, \chi_{8p} \right) \right|
 \mathcal{N}(p,2k-1)  \Phi \left( \frac pX \right) \\
 \leq & \left( \sum_{(p,2)=1}(\log p)L \left( \frac{1}{2}, \chi_{8p} \right)^{2k} \Phi \left( \frac pX \right) \right)^{1/(2k)} \left(\sum_{(p,2)=1}(\log p) \mathcal{N}(p,
 2k-1)^{\frac {2k}{2k-1}} \Phi \left( \frac pX \right) \right)^{1-1/(2k)}.
\end{split}
\end{align}
The estimation given in \eqref{basicboundkbig1'} then follows from above by applying the estimation in \eqref{est0'} in the last sum of \eqref{basicbound2}. This completes the proof.
\end{proof}

  It follows from the above lemma that in order to establish Theorem \ref{thmlowerbound}, it suffices to prove the following three propositions.
\begin{proposition}
\label{Prop4}
  With notations as above, we have for $k > 0$,
\begin{align}
\label{L1estmation}
 \sum_{(p,2)=1}(\log p) L \left( \frac{1}{2}, \chi_{8p} \right){\mathcal N}(p, 2k-1)\Phi\Big(\frac{p}{X}\Big) \gg X (\log X)^{ \frac {(2k)^2+1}{2}}.
\end{align}
\end{proposition}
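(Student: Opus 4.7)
The plan is to combine the approximate functional equation (Lemma~\ref{lem:AFE}) with the smoothed prime character sum estimate (Lemma~\ref{lemma logd}), following the Heap--Soundararajan lower bound framework. I would expand
\[
L(\tfrac12,\chi_{8p})\mathcal{N}(p,2k-1)
= 2\sum_{n,\ell}\frac{\beta(\ell)\chi_{8p}(n\ell)}{\sqrt{n\ell}}\,V\!\left(\frac{n}{\sqrt{p}}\right),
\]
where the coefficients $\beta(\ell)$ come from expanding the product $\prod_i E_{e^2k\alpha_i^{-3/4}}((2k-1)\mathcal{P}_i(p))$, and $\ell$ is supported on integers whose prime factors lie in $\mathcal{R}=\bigcup_{i=1}^{\mathcal{I}}(X^{\alpha_{i-1}},X^{\alpha_i}]$. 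By~\eqref{exponentbound}, $\ell$ is of size at most $X^{\lceil 4e^2k10^{-M/4}\rceil}$, a small power of $X$ when $M=M(k)$ is taken sufficiently large. Using the Mellin representation~\eqref{eq:Vdef} of $V$ to separate the $p$-dependence from $V(n/\sqrt{p})$, I would swap the order of summation and apply Lemma~\ref{lemma logd} to the innermost sum $\sum_{p}(\log p)\chi_{8p}(n\ell)\Phi(p/X)$.

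Lemma~\ref{lemma logd} would then produce a main term supported on pairs $(n,\ell)$ with $n\ell=\square$, together with an error of size $X^{1/2+\varepsilon}\log\log(n\ell+2)$ per pair. Since the $V$-weight effectively truncates $n$ at $X^{1/2+\varepsilon}$ and the support of $\ell$ has size $X^{o(1)}$, the aggregate error is $O(X^{1-\delta})$ for some $\delta=\delta(k,M)>0$ once $M$ is large enough. For the main term, I would decompose $n=n_1 m^2$ with $n_1$ having all prime factors in $\mathcal{R}$ and $m$ coprime to $\mathcal{R}$, so that the constraint $n\ell=\square$ separates as $n_1\ell=\square$ (a condition on primes in $\mathcal{R}$) together with $m$ free outside $\mathcal{R}$. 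Standard sieve estimates combined with Mertens' theorem (Lemma~\ref{RS}) show that the sum over $m$ coprime to $\mathcal{R}$, weighted by $V(n_1 m^2/\sqrt{X})/m$, reduces to a constant depending only on $M$.

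The remaining sum over $n_1,\ell\in\mathcal{R}$ with $n_1\ell=\square$ factors as an Euler product over $q\in\mathcal{R}$. Heuristically replacing the truncated exponential $E_{e^2k\alpha_i^{-3/4}}$ by the full exponential, the local factor at $q\in\mathcal{R}$ becomes
\[
\frac12\left[\frac{\exp((2k-1)/\sqrt{q})}{1-1/\sqrt{q}}+\frac{\exp(-(2k-1)/\sqrt{q})}{1+1/\sqrt{q}}\right]
= 1+\frac{(2k)^2+1}{2q}+O(q^{-3/2}),
\]
as follows from an elementary Taylor expansion. The product over $q\in\mathcal{R}$ then evaluates to $(\log X)^{((2k)^2+1)/2}$ by Mertens' theorem, yielding the claimed lower bound $X(\log X)^{((2k)^2+1)/2}$.

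The main obstacle will be to rigorously justify the heuristic replacement of $E_{e^2k\alpha_i^{-3/4}}$ by the full exponential in the Euler product calculation. Quantitatively, one must verify that the tail terms omitted by the truncation (controllable via \eqref{Ebound}) contribute only lower-order corrections to each local Euler factor, and that the product of these correction factors across all primes $q\in\mathcal{R}$ remains bounded. A secondary technical issue is the treatment of $V(n_1 m^2/\sqrt{X})$ when $n_1$ is not much smaller than $\sqrt{X}$; this is handled using the rapid decay of $V$ on vertical lines, as in the proof of Lemma~\ref{lemma logd}.
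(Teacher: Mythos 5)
Your approach is correct in outline and is essentially a direct inlining of what the paper does in two stages. The paper first proves a twisted first moment asymptotic (Proposition~\ref{twistedfirstmoment}), namely that
$\sum_{(p,2)=1}(\log p)L(\frac12,\chi_{8p})\chi_{8p}(\ell)\Phi(p/X) = C_1\frac{X}{\sqrt{\ell_1}}\log(X/\ell_1)+C_2\frac{X}{\sqrt{\ell_1}}+O(\cdots)$, by exactly the route you describe (approximate functional equation, Mellin inversion, contour shift under GRH), and then expands $\mathcal{N}(p,2k-1)$ via \eqref{5.1}, applies the twisted first moment term by term, and cites \cite[Proposition 2.1]{Gao2021-3} for the evaluation of the resulting combinatorial main-term sum. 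You skip the intermediate proposition and go directly to Lemma~\ref{lemma logd}, then compute the Euler product yourself. Your local-factor identity is correct, and the arithmetic matches: the paper's Euler product over the $\ell$-variable alone yields $1+\frac{(2k)^2-1}{2q}+O(q^{-2})$, giving $(\log X)^{((2k)^2-1)/2}$, and the extra power of $\log X$ comes from the $\log(X/\ell_1)$ in the twisted first moment; you instead fold the $n$-sum into the local factor to get $1+\frac{(2k)^2+1}{2q}+O(q^{-3/2})$, with the $m$-sum over integers coprime to $\mathcal{R}$ and the $\prod_{q\in\mathcal{R}}(1-1/q)$ cancelling to a constant depending only on $M$. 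Both accountings lead to the same exponent.

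The genuine gap is the one you flag, but I would describe it more sharply than you do. The issue is not that the truncation introduces ``lower-order corrections to each local Euler factor'' whose product you must bound; it is that the truncation destroys the Euler product structure altogether. Each $\mathcal{N}_i(p,2k-1)$ is a polynomial in $\mathcal{P}_i(p)$ of bounded degree $2\lceil e^2k\alpha_i^{-3/4}\rceil$, so the coefficient $\beta(\ell)$ in \eqref{5.1} carries the indicator $b_i(n_i)$ that caps the \emph{total} number of prime factors (with multiplicity) from each block $(X^{\alpha_{i-1}},X^{\alpha_i}]$. The resulting main-term sum therefore does not factor over primes, and establishing the claimed lower bound from such a sum is precisely what requires the careful argument of \cite[Proposition 2.1]{Gao2021-3} (one shows, e.g., that restricting to terms not saturating the cap recovers an Euler product up to a multiplicative factor $1+O(e^{-e^2k\alpha_i^{-3/4}})$ per block, using an estimate like \eqref{Ebound}). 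The paper does not re-prove this step either; it cites the prior work. Your write-up, as is, leaves this step as an acknowledged heuristic, so the proof is incomplete at the same point, and the description of the obstacle understates why it is nontrivial.
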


\begin{proposition}
\label{Prop6}
  With notations as above, we have for $0<k<1/2$,
\begin{align*}
\sum_{(p,2)=1}(\log p) L \left( \frac{1}{2}, \chi_{8p} \right)^2{\mathcal N}(p, 2k-2)\Phi\Big(\frac{p}{X}\Big)  \ll X ( \log X  )^{\frac {(2k)^2+2}{2}}.
\end{align*}
\end{proposition}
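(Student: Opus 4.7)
The plan is to combine the approximate functional equation (Lemma \ref{lem:AFE}) for $L(1/2, \chi_{8p})^2$ with the smoothed prime-sum estimate (Lemma \ref{lemma logd}) under GRH. By Lemma \ref{lem:AFE}, $L(1/2, \chi_{8p})^2 = 4\sum_{m,n \ge 1}\chi_{8p}(mn)(mn)^{-1/2} V(m/\sqrt{p})V(n/\sqrt{p})$, and by expanding the product $\prod_i \mathcal{N}_i(p,2k-2)$ of truncated exponentials \eqref{E}, we may write $\mathcal{N}(p,2k-2) = \sum_a \lambda(a) \chi_{8p}(a)/\sqrt{a}$, where the coefficients $\lambda(a)$ are supported on $a \le X^\theta$ for an exponent $\theta = \theta(M)$ that can be made arbitrarily small by the choice of $M$, according to \eqref{exponentbound}. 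Substituting and interchanging summations, the target sum becomes
\[ 4\sum_{a,m,n} \frac{\lambda(a)}{\sqrt{amn}} \sum_{(p,2)=1}(\log p)\chi_{8p}(amn) V\!\Big(\frac{m}{\sqrt{p}}\Big) V\!\Big(\frac{n}{\sqrt{p}}\Big)\Phi\!\Big(\frac{p}{X}\Big). \]

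Next, I would use the Mellin representation \eqref{eq:Vdef} to decouple the $p$-dependence in $V(m/\sqrt{p})V(n/\sqrt{p})$, obtaining a double contour integral in $(s,w)$ whose integrand is a prime sum of the form $\sum_p(\log p)\chi_{8p}(amn)\Phi(p/X) p^{(s+w)/2}$. Applying Lemma \ref{lemma logd} to this inner sum (with the smooth weight $\Phi(t)t^{(s+w)/2}$ replacing $\Phi$) splits it into a diagonal contribution from $amn = \square$ equal to $\widehat{\Phi}(1+(s+w)/2)X^{1+(s+w)/2}$, and an off-diagonal error of size $O(X^{1/2+\varepsilon}\log\log(amn+2))$.

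The diagonal contribution yields the main term, and its estimation reduces to analyzing the Dirichlet series
\[ F(s,w) \ = \ \sum_{amn=\square}\frac{\lambda(a)}{\sqrt{a}\,m^{1/2+s}n^{1/2+w}}, \]
which has an Euler product $\prod_q F_q(s,w)$ by the local parity condition $v_q(a) + v_q(m) + v_q(n) \equiv 0 \pmod 2$. A direct computation at $s = w = 0$, collecting the six index triples $(v_q(a), v_q(m), v_q(n))$ of total weight $2$---namely $(0,2,0), (0,0,2), (0,1,1)$ contributing $3/q$, the triples $(1,1,0), (1,0,1)$ contributing $(4k-4)/q$, and $(2,0,0)$ contributing $(2k-2)^2/(2q)$---yields
\[ F_q(0,0) \ = \ 1 + \frac{2k^2 + 1}{q} + O(q^{-2}). \]
A Mertens-type product bound followed by a contour shift to $\Re(s) = \Re(w) = 1/\log X$ then gives the diagonal contribution $\ll X(\log X)^{2k^2 + 1}$, matching the desired order $((2k)^2 + 2)/2 = 2k^2 + 1$.

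The off-diagonal requires more care: a naive termwise bound using $|\lambda(a)|$ gives the unacceptable estimate $X^{1+\eta}$, so cancellation must be exploited. The remedy is to collect triples by $c = amn$ and study the inner sum $\sum_{a \mid c}\lambda(a)d(c/a)$, which exhibits cancellation from the alternating signs of $\lambda$ (since $2k-2 < 0$ for $0 < k < 1/2$); combining a Cauchy--Schwarz argument on $c$ with a mean-square estimate on $\lambda$ and the GRH bound on $L'/L$ from the proof of Lemma \ref{lemma logd} should give an off-diagonal contribution of $O(X^{1-\delta})$ for some $\delta = \delta(M,k) > 0$, provided $M$ is chosen sufficiently large. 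The principal technical obstacle is this off-diagonal analysis, together with the explicit verification of the Euler product local factor, accounting for the truncation of $\lambda(a_i)$ at $\Omega(a_i) \le 2\lceil e^2 k \alpha_i^{-3/4}\rceil$ and its delicate interplay with the square condition on $amn$.
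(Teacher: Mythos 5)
Your approach is genuinely different from the paper's, and the difference matters: the paper never touches the approximate functional equation in the proof of Proposition~\ref{Prop6}. Instead it runs the Soundararajan--Harper upper-bound machinery: it exponentiates the GRH bound of Lemma~\ref{lem: logLbound} to replace $L(1/2,\chi_{8p})^{2}$ by $\exp\bigl(2\sum_{i}\mathcal{M}_{i,j}(p)\bigr)$ up to a controlled factor, partitions the primes into events $\mathcal{S}(j)$ according to whether the truncated prime sums $\mathcal{M}_{i,l}(p)$ are small, replaces the exponentials on each $\mathcal{S}(j)$ by the truncated Taylor polynomials $E_{\ell}$ as in \eqref{eMbound}, and then applies the orthogonality input (Lemma~\ref{lemma logd}) only to short Dirichlet polynomials with bounded coefficients. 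The set $\mathcal{S}(0)$ of bad primes is handled separately by Cauchy--Schwarz against the crude bound of Proposition~\ref{prop: upperbound}. No second-moment off-diagonal ever appears.

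Your route --- squaring the approximate functional equation and inserting Lemma~\ref{lemma logd} termwise --- has a gap that I do not see how to close at the level of detail you give. After the Mellin decoupling, the off-diagonal (non-square $amn$) contribution is
\[
\sum_{\substack{a,m,n \\ amn \ne \square}} \frac{\lambda(a)}{\sqrt{amn}}\,O\!\bigl(X^{1/2+\varepsilon}\log\log(amn+2)\bigr),
\]
with $m,n$ running effectively up to $\sqrt{X}$. Even if every last bit of cancellation in the $a$-sum is extracted, the bilinear $(m,n)$-sum of $1/\sqrt{mn}$ is of size $X^{1/2}$, giving $X^{1+\varepsilon}$, which is larger than the target $X(\log X)^{2k^{2}+1}$. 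Your proposed remedy (grouping by $c=amn$, Cauchy--Schwarz on $c$, mean-square of $\lambda$) aims cancellation at the $a$-sum, but that is not where the loss lives: the loss is in the unrestricted $(m,n)$-sum, and there is no sign to exploit there. What you would actually need is a twisted second-moment estimate of the shape $\sum_{p}(\log p)\chi_{8p}(a)L(1/2,\chi_{8p})^{2}\Phi(p/X)$ with a power saving uniform in $a$ up to $X^{\theta}$ --- this is essentially the (hard, GRH-conditional) content of Baluyot--Pratt for a single twist, and it is not available as a black box in this paper. Your Euler-product local computation for the diagonal is correct in spirit (the local density $1+(2k^{2}+1)/q+O(q^{-2})$ does give the exponent $(2k)^{2}/2+1$), modulo the truncation issues you flag, but the off-diagonal is a genuine obstruction, not a technical loose end, and it is precisely the obstruction the paper's proof is designed to circumvent.
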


\begin{proposition}
\label{Prop5}
  With notations as above, we have for $k >0$, $k \neq 1/2$,
\begin{align*}
\sum_{(p,2)=1}(\log p) \prod^{\mathcal{I}}_{i=1} \big ( {\mathcal N}_i(p, k)^2+ {\mathcal Q}^{r_k}_i(p,k)^2 \big )\Phi\leg{p}{X}  \ll & X ( \log X  )^{\frac
{(2k)^2}{2}}.
\end{align*}
\end{proposition}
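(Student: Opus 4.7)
The plan is to expand $\prod_{i=1}^{\mathcal{I}}\bigl(\mathcal{N}_i(p,k)^2+\mathcal{Q}_i^{r_k}(p,k)^2\bigr)$ into a sum of $2^{\mathcal{I}}$ terms indexed by subsets $S\subseteq\{1,\dots,\mathcal{I}\}$, each being a product of short Dirichlet polynomials in $\chi_{8p}$ whose prime supports lie in the pairwise disjoint intervals $(X^{\alpha_{i-1}},X^{\alpha_i}]$. By \eqref{exponentbound}, inflated by a factor bounded in terms of $r_k$ and the squaring, the total Dirichlet length is at most $X^{\eta}$ with $\eta=\eta_{k,M}\to 0$ as $M\to\infty$. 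Applying Lemma \ref{lemma logd} monomial by monomial replaces each sum $\sum_{(p,2)=1}(\log p)\chi_{8p}(c)\Phi(p/X)$ by the square-diagonal main term $\delta_{c=\square}\widehat{\Phi}(1)X$; the accumulated error is bounded by $X^{1/2+\eta+\varepsilon}(\log X)^{O(1)}$, which is $o(X)$ once $M=M(k)$ is chosen large enough.

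The resulting square-diagonal main term factors as $\prod_{i=1}^{\mathcal{I}} D_i$ because the $\mathcal{P}_i$ have pairwise disjoint prime supports, where $D_i$ is the contribution from $\mathcal{N}_i(p,k)^2$ when $i\in S$ and from $\mathcal{Q}_i^{r_k}(p,k)^2$ when $i\notin S$. For $i\in S$, expanding $\mathcal{N}_i(p,k)^2$ as a double sum over primes and pairing primes between the two copies, in the same Gaussian moment computation as in \cite[Lemma 3.4]{Gao2021-3}, gives
\[
D_i=\exp\Bigl(2k^2\sum_{X^{\alpha_{i-1}}<q\leq X^{\alpha_i}}\frac{1}{q}\Bigr)\bigl(1+O(e^{-e^2 k\alpha_i^{-3/4}})\bigr).
\]
Choosing $S=\{1,\dots,\mathcal{I}\}$, multiplying over $i$, and invoking Lemma \ref{RS} then gives $\prod_i D_i\ll(\log X)^{2k^2}=(\log X)^{(2k)^2/2}$, which is precisely the target.

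For every other subset $S$ there is some $i\notin S$ whose factor $D_i$ comes from the single monomial $(C\mathcal{P}_i(p)/K_i)^{4r_k K_i}$ with $K_i=\lceil e^2 k\alpha_i^{-3/4}\rceil$ and $C=C(k)$ absolute; the same prime pairing together with Stirling and the uniform bound $\sum_{X^{\alpha_{i-1}}<q\leq X^{\alpha_i}}q^{-1}\leq 10$ from \eqref{sumpj} yields $D_i\ll(C'/K_i)^{2r_k K_i}$, which is doubly exponentially small in $i$ once $M$ is large. Such tiny factors comfortably absorb the at-most $(\log X)^{2k^2}$ contribution of the $\mathcal{N}$-averages at the complementary indices, so every such term is dominated by the full-$\mathcal{N}$ contribution. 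The main obstacle is the combinatorial bookkeeping needed to verify that the exponent in the main term is exactly $2k^2$ rather than something larger: one must expand $\mathcal{N}_i(p,k)^2=E_{K_i}(k\mathcal{P}_i(p))^2$ carefully, account for pairings within and across the two copies of $E_{K_i}(k\mathcal{P}_i(p))$, and check that truncation to degree $2K_i$ is harmless. These points are handled uniformly in $i$ by taking $M=M(k)$ sufficiently large, which simultaneously keeps $\eta$ small enough to render the error in Lemma \ref{lemma logd} negligible.
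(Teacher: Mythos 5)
Your proposal is correct and follows essentially the same approach the paper intends: the paper disposes of Proposition \ref{Prop5} with a one-line reference to \cite[Proposition 2.2]{Gao2021-3}, whose proof is exactly the Harper-type expansion you describe — split the product over subsets $S$, apply the orthogonality relation (here Lemma \ref{lemma logd}) monomial by monomial so that only the square-diagonal survives up to an $X^{1/2+o(1)}$ error, and evaluate the factored diagonal, with the $\mathcal{N}$-blocks giving $\exp\bigl(2k^2\sum 1/q\bigr)(1+O(e^{-e^2k\alpha_i^{-3/4}}))$ and the $\mathcal{Q}$-blocks giving factors doubly exponentially small in $K_i$. The bookkeeping details you flag (sum of absolute coefficients $\ll X^{O_k(10^{-M/4})}$, $\sum_i D_i^{(\mathcal{Q})}/D_i^{(\mathcal{N})}=o(1)$ so the subset sum is $(1+o(1))\prod_i D_i^{(\mathcal{N})}$) all check out once $M=M(k)$ is taken large.
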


  Notice that Proposition \ref{Prop5} can be established in a manner similar to \cite[Proposition 2.2]{Gao2021-3}, upon using Lemma \ref{lemma logd} in our situation. Thus, it remains to establish the other two propositions. In the course of the proof of Proposition \ref{Prop4}, we need the following result on the twisted first moment of quadratic Dirichlet $L$-functions of prime moduli.
\begin{proposition}
\label{twistedfirstmoment}
  With notations as above and let $X$ be a large real number.  Suppose $\Phi$ is a function satisfying the conditions given in Section~\ref{smoothsum}.  Further let $\ell$ be a fixed positive integer and write $\ell$ uniquely as
  $\ell=\ell_1\ell^2_2$ with $\ell_1$ square-free, we have
\begin{align*}
 \sum_{(p,2)=1}(\log p)L \left( \frac{1}{2}, \chi_{8p} \right)\chi_{8p}(\ell) \Phi\leg{p}{X} = C_1 \frac {X}{\sqrt{\ell_1}} \log \Big ( \frac {X}{\ell_1} \Big
 )+C_2 \frac {X}{\sqrt{\ell_1}} + O(X^{7/8 + \varepsilon}\ell_1^{-1/4+\varepsilon}+X^{1/2+\varepsilon}),
\end{align*}
  where $C_1$, $C_2$ are explicit constants depending on $\Phi$ only.
\end{proposition}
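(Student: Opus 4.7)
The plan is to combine the approximate functional equation (Lemma \ref{lem:AFE}) with the smoothed character sum estimate (Lemma \ref{lemma logd}), isolate the main term via a contour-shift argument, and control the resulting integrals.

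First, I would substitute the AFE to expand $L(1/2,\chi_{8p})$ as a sum over $n$, and use $\chi_{8p}(n)\chi_{8p}(\ell) = \chi_{8p}(n\ell)$ to bring $\ell$ inside the character. Interchanging the summation orders reduces the problem to understanding $\sum_p (\log p)\chi_{8p}(n\ell)V(n/\sqrt p)\Phi(p/X)$ for each $n$. I would view $W_n(x) := V(n/\sqrt{xX})\Phi(x)$ as a smooth, compactly supported function of $x$ (supported on $[1/2,5/2]$ like $\Phi$) and apply Lemma \ref{lemma logd} to it, obtaining a diagonal contribution precisely when $n\ell$ is a perfect square plus an error $O(X^{1/2+\varepsilon}\log\log(n\ell+2))$, the implicit constants being controlled by $\|V\|_\infty$ and the rapid decay of $V$ for large argument.

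For the main term, the factorization $\ell = \ell_1\ell_2^2$ with $\ell_1$ square-free forces $n\ell=\square$ to be $n=\ell_1 m^2$, producing the prefactor $1/\sqrt{\ell_1}$. Inserting the Mellin representation \eqref{eq:Vdef} of $V$ and exchanging the $m$-sum with the contour integral produces $\sum_{m\ge 1} m^{-1-2s}=\zeta(1+2s)$, so that
\[
M \;=\; \frac{2X}{\sqrt{\ell_1}}\cdot\frac{1}{2\pi i}\int\limits_{(2)} g(s)\,\ell_1^{-s}\,X^{s/2}\,\widehat{\Phi}(1+s/2)\,\zeta(1+2s)\,\frac{\dif s}{s},
\]
where $g(s) := (8/\pi)^{s/2}\Gamma(s/2+1/4)/\Gamma(1/4)$. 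Shifting the contour to $\Re(s)=-1/4+\varepsilon$ crosses only the double pole at $s=0$ (coming from $\zeta(1+2s)$ and $1/s$ together); a Laurent expansion of the integrand around $s=0$ yields the main terms of the stated form with $C_1, C_2$ depending on $\Phi$ only.

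The shifted integral on $\Re(s)=-1/4+\varepsilon$ is bounded using $|X^{s/2}| = X^{-1/8+\varepsilon/2}$ and $|\ell_1^{-s}| = \ell_1^{1/4-\varepsilon}$, together with the convexity bound $|\zeta(1+2s)|\ll(1+|\Im s|)^{1/4+\varepsilon}$ on this line and the rapid decay of $\widehat{\Phi}(1+s/2)$ and of $|g(s)|$ (by Stirling) in the imaginary direction; this yields the bound $O(X^{7/8+\varepsilon}\ell_1^{-1/4+\varepsilon})$. The residual error, coming from summing the tail $O(X^{1/2+\varepsilon}\log\log(n\ell+2))$ from Lemma \ref{lemma logd} against $n^{-1/2}$, is controlled by the rapid decay of $V(n/\sqrt{xX})$, which effectively truncates the $n$-sum at the $\sqrt X$-scale, giving the advertised $O(X^{1/2+\varepsilon})$ remainder. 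The principal technical obstacle is to obtain precisely the $\ell_1$-dependence $\ell_1^{-1/4+\varepsilon}$ in the main error: this forces the contour to be shifted to exactly $\Re(s)=-1/4+\varepsilon$ and requires delicately balancing the $X$-savings from $|X^{s/2}|$ against the $\ell_1$-loss from $|\ell_1^{-s}|$, while ensuring convergence of the resulting vertical integral.
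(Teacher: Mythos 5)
Your plan tracks the paper's proof closely: expand by the approximate functional equation, isolate the diagonal $n\ell=\square$ via Mellin inversion and a contour shift for the character sum, rewrite the main term using the Mellin representation of $V$ and $\sum_m m^{-1-2s}$ to pull out $\zeta(1+2s)$, and shift to $\Re(s)=-1/4+\varepsilon$, picking up a double pole at $s=0$ and using the convexity bound for $\zeta$. This is the same route the authors take. Two small corrections, though neither is fatal.

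First, your accounting of the residual error is off by a power. Summing $n^{-1/2}\cdot O(X^{1/2+\varepsilon}\log\log(n\ell+2))$ over $n\ll X^{1/2+\varepsilon}$ (the effective cutoff supplied by the decay of $V$, or equivalently by the factor $(1+m/\sqrt X)^{-E}$ in $\widehat f$) gives $O(X^{3/4+\varepsilon})$, not $O(X^{1/2+\varepsilon})$. This is still absorbed into $X^{7/8+\varepsilon}\ell_1^{-1/4+\varepsilon}$ for bounded (or suitably small) $\ell_1$, so the stated error term survives; but the advertised $O(X^{1/2+\varepsilon})$ in the proposition actually arises from a separate source you omit: replacing $\sum_{(p,2)=1}(\log p)$ by $\sum_n\Lambda(n)$ costs a prime-power error estimated in \eqref{ppowerest}, which is $O(X^{1/2+\varepsilon})$. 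Second, applying Lemma \ref{lemma logd} verbatim to $W_n(x)=V(n/\sqrt{xX})\Phi(x)$ is not quite licit, since that lemma is stated for a fixed $\Phi$ and its error constant depends on the weight; you must rerun the Mellin-inversion argument and verify that $\widehat{W_n}(s)$ decays both in $|\Im s|$ and in $n/\sqrt X$ uniformly, exactly as the paper does with $\widehat f(s)\ll(1+|s|)^{-E}(1+m/\sqrt X)^{-E}$. You should also note that the restriction to odd $m$ introduces the Euler factor $(1-2^{-1-2s})$ alongside $\zeta(1+2s)$, though this does not alter the pole structure at $s=0$.
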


\begin{proof}
  We apply the approximate functional equation given in Lemma \ref{lem:AFE} to see that
\begin{equation*} \label{splitting}
\mathcal{M} :=\sum_{(p,2)=1} (\log p) L \left( \frac{1}{2}, \chi_{8p} \right) \chi_{8p}(\ell) \Phi\leg{p}{X}
=2
\sum^{\infty}_{m=1} \frac {1}{m^{1/2}}\sum_{(p,2)=1}(\log p)\chi_{8p}(m\ell )V \left( \frac {m}{\sqrt{p}} \right)  \Phi\left(\frac{p}{X}\right).
\end{equation*}

   Due to the compact support of $\Phi$, we get that
\begin{align*}
 \sum_{(p,2)=1}(\log p)\chi_{8p}(m\ell )V \left( \frac {m}{\sqrt{p}} \right) \Phi\left(\frac{p}{X}\right) =& \sum^{\infty}_{n=1}  \chi_{8n}(m\ell )
 \Lambda(n) V \left( \frac {m}{\sqrt{n}} \right) \Phi \left( \frac {n}X \right)
 +O \left(\sum_{\substack{p^j \leq X^{1+\varepsilon}, j \geq 2 }} (\log p)\Phi \left( \frac {p^j}X \right)   \right ).
\end{align*}

  By \eqref{ppowerest}, the $O$-term above is $\ll X^{1/2+\varepsilon}$. To deal with the first term on the right-hand side of the above, we apply Mellin
   inversion to obtain that
\begin{align}
\label{twistedcharsum}
\sum^{\infty}_{n=1}  \chi_{8n}(m\ell ) \Lambda(n) V \left( \frac {m}{\sqrt{n}} \right) \Phi \left( \frac {n}X \right)
=& -\frac {\chi_8(m\ell )}{2\pi i}\int\limits_{(2)} \frac {L'(s, \psi_{m\ell})}{L(s, \psi_{m\ell})} \widehat{f}(s)X^s \dif s,
\end{align}
  where
\begin{equation*}
\widehat{f}(s) = \int\limits_0^{\infty} V \left(\frac{m}{\sqrt{X}} x^{-1/2}\right) \Phi(x) x^{s-1} \dif x.
\end{equation*}

The function $V$ satisfies the following bounds (see \cite[Lemma 2.1]{sound1}).
\[ V(x) = 1 + O \left( x^{1/2-\varepsilon} \right), \quad \mbox{as} \quad x \to 0 \]
and for large $x$ and integers $j \geq 0$.
\[ V^{(j)}(x) \ll_j \exp (- x^2/2) . \]
Now repeated integration by parts, together with the above bounds for $V$, yields
\begin{equation*}
 \widehat{f}(s) \ll (1 + |s|)^{-E} \left( 1 + m/\sqrt{X} \right)^{-E},
\end{equation*}
for any $\Re(s) >0$ and any integer $E>0$. \newline

   We evaluate the integral in \eqref{twistedcharsum} by shifting the line of integration to the line $\Re(s)=1/2+\varepsilon$ to encounter a pole at
   $s=1$ with residue $-\widehat{f}(1)X$ only if $m\ell$ is a perfect square.  The integral over $\Re(s)=1/2+\varepsilon$ can be estimated
   to be $O(X^{1/2+\varepsilon}\log \log (m\ell+2))$ using the rapid decay of $\widehat{f}$ on the vertical line and \eqref{Lderbound}. \newline

  To deal with the contribution from the residues, we deduce via the expression for $V$ in \eqref{eq:Vdef} that
\begin{align*}
 \widehat{f}(1) = \int_0^{\infty} V\left(\frac{m}{\sqrt{X}} x^{-1/2} \right) \Phi(x) dx = \frac{1}{2 \pi i} \int\limits_{(2)}
 \leg{\sqrt{X}}{m}^s\widehat{\Phi} \left( 1+\frac s2 \right)   \left(\frac{8}{\pi}\right)^{s/2} \frac {\Gamma(s/2+1/4)}{\Gamma(1/4)}
 \frac {\dif s}{s}.
\end{align*}	

Note that $m\ell$ is a perfect square if and only if $m$ is a square multiple of $\ell_1$.  So we may replace $m$ by $\ell_1 m^2$ and get that the contribution from the poles to $\mathcal{M}$ is
\begin{align} \label{M0integral}
  \frac {2X}{\sqrt{\ell_1}}\frac{1}{2 \pi i} \int\limits_{(2)} \widehat{\Phi} \left( 1+\frac s2 \right)   \left(\frac{8}{\pi}\right)^{s/2} \frac
  {\Gamma(s/2+1/4)}{\Gamma(1/4)}\leg{\sqrt{X}}{\ell_1}^s \left( 1-\frac 1{2^{1+2s}} \right) \zeta(1+2s) \frac {\dif s}{s}.
\end{align}

  We evaluate the integral in \eqref{M0integral} by moving the contour of integration to $-1/4 + \varepsilon$, crossing a pole at $s=0$ only.
  To estimate the integral on the new line, we apply the convexity bound for $\zeta(s)$ given in \cite[(5.20)]{iwakow}  to see that, when
  $\Re(s)=1/2+\varepsilon$,
\begin{equation*}
 \zeta(s) \ll (1+|s|)^{1/4+\varepsilon}.
\end{equation*}

  The residue of the pole in the above process can be easily computed and this leads to the proof of the proposition.
\end{proof}

\subsection{Proof of Proposition \ref{Prop4}}
\label{sec 4}

Let $w(n)$ be the multiplicative function such that $w(p^{\beta}) = \beta!$ for prime powers $p^{\beta}$ and let $\Omega(n)$ denote the number of distinct primes dividing $n$.  Let $b_i(n), 1 \leq i \leq {\mathcal{I}}$ be functions such that $b_i(n)=0$ or $1$ and that $b_i(n)=1$ only when $n$ is composed of at most $2\lceil e^2k\alpha^{-3/4}_i \rceil$ primes, all from the interval $(X^{\alpha_{i-1}}, X^{\alpha_i}]$. \newline

These notations allow us to write for any real number $\alpha$,
\begin{equation}
\label{5.1}
{\mathcal N}_i(p, \alpha) = \sum_{n_i} \frac{1}{\sqrt{n_i}} \frac{\alpha^{\Omega(n_i)}}{w(n_i)}  b_i(n_i) \chi_{8p}(n_i), \quad 1\le i\le {\mathcal{I}}.
\end{equation}

We now apply Proposition~\ref{twistedfirstmoment} and \eqref{5.1} to evaluate the left side expression in \eqref{L1estmation}. In this process, we may ignore the error term in Proposition~\ref{twistedfirstmoment} as ${\mathcal N}(p, 2k-1)$ is a short Dirichlet polynomial by \eqref{exponentbound}.  In the same decomposition of $l$ in the statement of Proposition~\ref{twistedfirstmoment}, $n_i$ can be uniquely written as $n_i=(n_i)_1(n_i)_{2}^2$ with $(n_i)_1, (n_i)_2 \in \intz$ and $(n_i)_{1}$ square-free.  Thus main term contribution leads to
\begin{align*}
\sum_{(p,2)=1}(\log p) & L \left( \frac{1}{2}, \chi_{8p} \right){\mathcal N}(p, 2k-1)\Phi\Big(\frac{p}{X}\Big) \\
 \gg & X   \sum_{n_1, \cdots, n_{\mathcal{I}} }  \Big( \prod_{i=1}^{\mathcal{I}} \frac{1}{\sqrt{n_i(n_i)_{1}}}
\frac{(2k-1)^{\Omega(n_i)}}{w(n_i) } b_i(n_i)   \Big) \Big (\log \Big ( \frac {\sqrt{X}}{(n_1)_1 \cdots (n_{\mathcal{I}})_1} \Big )+C_2 \Big ).
\end{align*}

   An expression similar to the right side expression above already appears in the proof of \cite[Propsition 2.1]{Gao2021-3}. It follows from
   the treatment there that the right side expression above is $\gg X (\log X)^{((2k)^2+1)/2}$.
 This completes the proof of the proposition.

\subsection{Proof of Proposition \ref{Prop6}}
\label{sec: proof of Prop 6}

We begin by establishing some weaker estimations on the upper bounds for moments of the quadratic families of Dirichlet $L$-functions under consideration.  Let $X$ be a large number and $\mathcal{N}(V, X)$ be the number of Dirichlet characters $\chi_{8p}$  such that $2 < p \leq X$ and $\log L(1/2,\chi_{8p}) \geq V+\half \log \log X$.  We then have that
\begin{align}
\label{momentint}
\begin{split}
  \sum_{\substack{2<p \leq X}} (\log p) L \left( \frac{1}{2}, \chi_{8p} \right)^{k} & \leq -(\log X) \int\limits_{-\infty}^{+\infty}\operatorname{exp} \Big( kV+\frac k2 \log \log X \Big) \dif \mathcal{N}(V,X) \\
 & = k(\log X)^{k/2+1} \int\limits_{-\infty}^{+\infty}\operatorname{exp}(kV)\mathcal{N}(V,X) \dif V.
\end{split}
\end{align}

   To estimate the last integral above, we note that by setting $x =\log X$ and summing over $p$ trivially in \eqref{equ:3.3'} to see that we may assume that $\mathcal{N}(V, X) \leq 6\log X/\log \log X$.  For $V < 10\sqrt{\operatorname{log}\operatorname{log}
  X}$, we use the trivial bounds $\mathcal{N}(V, X) \ll X/(\log X)$. For the remaining range of $V$, we note the following upper bounds for $\mathcal{N}(V, X)$ given in \cite[Section 4]{Sound2009}.
\begin{prop}
\label{propNbound}
Assume RH for $\zeta(s)$ and GRH for $L(s,\chi_{8p})$ for all odd primes $p$. We have for any fixed $k>0$,
 \begin{align*}
  \mathcal{N}(V,X)\ll \left\{
 \begin{array}[c]{ll}
\displaystyle{  \frac {X}{\log X} \exp \left(-\frac{V^2}{2 \log \log X}(1+o(1)) \right ), \quad 10\sqrt{\operatorname{log}\operatorname{log}
  X}\leq V \leq o((\log \log X)(\log \log \log X)) }, \\ \\
\displaystyle{  \frac {X}{\log X} \exp \left(-c V \log V \right), \quad o((\log \log X)(\log \log \log X)) \leq V \leq \frac{6\log X}{\log \log X} }.
\end{array}
\right .
 \end{align*}
\end{prop}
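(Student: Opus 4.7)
The plan is to follow the Soundararajan--Harper scheme of \cite{Sound2009} and \cite{Harper}: use Lemma \ref{lem: logLbound} to dominate $\log L(\tfrac12,\chi_{8p})$ by a short prime sum, then estimate the tail via Markov's inequality applied to a high moment of that sum, the moment itself being computable through Lemma \ref{lemma logd}. Explicitly, for a parameter $x = x(V) \leq X$ to be chosen, Lemma \ref{lem: logLbound} implies that if $\log L(\tfrac12, \chi_{8p}) \geq V + \tfrac12 \log\log X$, then
$$
S(p) := \sum_{q \leq x}\frac{\chi_{8p}(q)}{q^{1/2+1/\log x}}\frac{\log(x/q)}{\log x} \;\geq\; V - \frac{\log X}{\log x} - \tfrac12(\log\log x - \log\log X) - O(1),
$$
so picking $\log x$ somewhat smaller than $\log X$ but still comparable to it makes the right-hand side $(1-o(1))V$.

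For any positive integer $k$, Markov gives $\mathcal{N}(V,X) \ll (\log X)^{-1} V^{-2k}\sum_{(p,2)=1}(\log p)\,S(p)^{2k}\,\Phi(p/X)$. Expanding $S(p)^{2k}$ produces a sum over tuples $(q_1,\ldots,q_{2k})$ of primes in $[2,x]$; applying Lemma \ref{lemma logd} term by term isolates the \emph{diagonal} contribution (from tuples with $q_1\cdots q_{2k}$ a perfect square), which equals
$$
\widehat{\Phi}(1)X \cdot \frac{(2k)!}{2^k k!}\,\mathcal{V}^k(1+o(1)),\qquad \mathcal{V} := \sum_{q\leq x}\frac{1}{q^{1+2/\log x}}\frac{\log^2(x/q)}{\log^2 x} = \log\log x + O(1)
$$
(the last step by Lemma \ref{RS}), plus an off-diagonal contribution of order $X^{1/2+\varepsilon}\bigl(\sum_{q\leq x} q^{-1/2}\bigr)^{2k}\log\log x$. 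A Gaussian computation with Stirling then yields $(2k)!/(2^k k!)\,\mathcal{V}^k/V^{2k} \sim (2k\mathcal{V}/(eV^2))^k$, which is optimized by taking $k \asymp V^2/\mathcal{V}$ and gives the target bound $\exp(-V^2/(2\mathcal{V})(1+o(1)))$.

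In Range~1, where $10\sqrt{\log\log X} \leq V = o((\log\log X)(\log\log\log X))$, one may choose $\log x$ close to $\log X$ (for instance $\log x = \log X / V^{o(1)}$) so that $\mathcal{V} = (1+o(1))\log\log X$, while keeping the off-diagonal error $X^{1/2+\varepsilon}x^k/(\log x)^{2k}$ negligible against $X$; the optimized $k \sim V^2/(2\log\log X)$ is admissible throughout, and the Gaussian rate $\exp(-V^2/(2\log\log X)(1+o(1)))$ follows. In Range~2, where $V$ ranges up to $6\log X/\log\log X$, the admissibility constraint $k\log x \lesssim \log X$ becomes binding; one must take $\log x$ much smaller than $\log X$ and $k$ correspondingly smaller than $V^2/\mathcal{V}$, and the optimization produces the slower rate $\exp(-cV\log V)$.

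The main obstacle is the simultaneous tuning of $x$ and $k$ to satisfy three competing constraints: (i) the off-diagonal error from Lemma \ref{lemma logd} in the moment calculation must be dominated by the diagonal, which forces $k\log x$ not too large relative to $\log X$; (ii) the effective variance $\mathcal{V}$ should remain asymptotic to $\log\log X$, which forces $\log x$ not too small; and (iii) the shift $\log X/\log x$ in the reduction from $\log|L|$ to $S(p)$ must be $o(V)$ so as to preserve the threshold. The crossover $V \sim (\log\log X)(\log\log\log X)$ is precisely where these constraints cease to be jointly satisfiable with the Gaussian-optimal choice $k \asymp V^2/\log\log X$, forcing the switch from Gaussian to Poisson-type decay.
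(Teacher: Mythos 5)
The approach you sketch — Markov's inequality applied to a high moment of the short Dirichlet polynomial, with the moment computed via the orthogonality supplied by Lemma \ref{lemma logd} — is precisely Soundararajan's scheme, and indeed the paper proves the proposition simply by citing \cite[Section 4]{Sound2009} and noting that Lemma \ref{lemma logd} supplies the needed orthogonality for the prime-moduli family. So the skeleton is correct and matches the intended argument.

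However, there is a genuine gap in your treatment of Range~1. Your constraint bookkeeping does not actually allow the Gaussian-optimal choice $k\asymp V^2/\log\log X$ throughout $10\sqrt{\log\log X}\le V=o((\log\log X)(\log\log\log X))$. The shift term requires $\log X/\log x\le (1-\delta)V$, hence $\log x\ge \log X/V$ (up to constants); the admissibility constraint for the off-diagonal error requires $k\log x\lesssim\log X$, hence $k\lesssim V$. But $k\asymp V^2/\log\log X$ exceeds $V$ as soon as $V\gtrsim\log\log X$. So a single moment of the full prime sum $S(p)$ gives the Gaussian rate only up to $V\asymp\log\log X$, not up to $V=o((\log\log X)(\log\log\log X))$. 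The missing ingredient, which is the technical heart of Soundararajan's Section~4, is to split the prime sum into a long piece over $q\le z$ (with $z$ a small power of $x$, say $z=x^{1/\log\log X}$) and a short piece over $z<q\le x$, observing that if $S(p)$ is large then either the long piece or the short piece must exceed an appropriate sub-threshold. One then applies Markov to each piece with \emph{different} moment orders: a moderate moment $k_1\asymp V^2/\log\log X$ for the long piece (admissible because $\log z$ is $\log x/\log\log X$ rather than $\log x$), and a very high moment for the short piece, whose variance is only $O(\log\log\log X)$ and hence supports much larger $k_2$ without violating admissibility. This two-scale argument is what buys the extended Gaussian range, and also produces the gradual degradation to the $\exp(-cV\log V)$ rate in Range~2 once the short piece becomes the bottleneck. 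Your proposal asserts the correct crossover point but the mechanism you describe (tuning a single $x$ and $k$) does not produce it.

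Two minor points: the weight $\log p$ and the smooth cutoff $\Phi(p/X)$ in your Markov bound cover only $p\asymp X$, so one needs a dyadic decomposition to account for all $2<p\le X$ (routine, but worth saying); and the divisor in Markov should be the shifted threshold $(1-o(1))V-\log X/\log x$ rather than $V$ itself, though you evidently intend this, it is exactly what creates the $(1+o(1))$ and $c$ in the two regimes.
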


  Applying the above bounds for $\mathcal{N}(V, X)$ in \eqref{momentint} readily leads to the following weaker upper bounds for moments of the family of quadratic Dirichlet $L$-functions .
\begin{prop}
\label{prop: upperbound}
Assume RH for $\zeta(s)$ and GRH for $L(s, \chi_{8p})$ for all odd primes $p$.  For any positive real number $k$ and any $\varepsilon>0$, we
have for large $X$,
\begin{align*}
    \sum_{\substack{2<p \leq X}} (\log p) L \left( \frac{1}{2}, \chi_{8p} \right)^{k}  \ll_k & X(\log X)^{k(k+1)/2+\varepsilon}.
\end{align*}
\end{prop}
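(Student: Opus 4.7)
The plan is to combine the estimate \eqref{momentint} with the pointwise upper bounds for $\mathcal{N}(V,X)$ from Proposition~\ref{propNbound} and the trivial bound, splitting the range of integration appropriately. Specifically, starting from
\begin{align*}
\sum_{2<p\leq X}(\log p) L\bigl(\tfrac12,\chi_{8p}\bigr)^{k} \leq k(\log X)^{k/2+1}\int_{-\infty}^{+\infty} \exp(kV)\,\mathcal{N}(V,X)\,\dif V,
\end{align*}
I would split the $V$-integral into three regions: (i) $V < 10\sqrt{\log\log X}$, (ii) $10\sqrt{\log\log X}\leq V\leq o((\log\log X)(\log\log\log X))$, and (iii) $o((\log\log X)(\log\log\log X))\leq V\leq 6\log X/\log\log X$. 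Contributions from $V>6\log X/\log\log X$ vanish by the observation (from setting $x=\log X$ in Lemma~\ref{lem: logLbound} and summing trivially over $p$) that $\mathcal{N}(V,X)=0$ in that range for $X$ large.

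For region (i), I use the trivial bound $\mathcal{N}(V,X)\ll X/\log X$ and compute
\begin{align*}
\int_{-\infty}^{10\sqrt{\log\log X}}\exp(kV)\,\dif V \ll \exp\bigl(10k\sqrt{\log\log X}\bigr),
\end{align*}
which contributes $\ll X(\log X)^{k/2}\exp(10k\sqrt{\log\log X})$, absorbed into the $(\log X)^{\varepsilon}$ factor. For region (ii), which is the dominant range, I insert the Gaussian bound from Proposition~\ref{propNbound} and complete the square:
\begin{align*}
kV - \frac{V^{2}}{2\log\log X}(1+o(1)) = \frac{k^{2}\log\log X}{2}(1+o(1)) - \frac{(V-k\log\log X)^{2}}{2\log\log X}(1+o(1)),
\end{align*}
so the $V$-integral evaluates to $\ll \sqrt{\log\log X}\,(\log X)^{k^{2}/2+o(1)}$, producing an overall contribution of $\ll X(\log X)^{k/2+k^{2}/2}(\log X)^{\varepsilon}=X(\log X)^{k(k+1)/2+\varepsilon}$, which matches the claimed bound.

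For region (iii), I use the bound $\mathcal{N}(V,X)\ll (X/\log X)\exp(-cV\log V)$. Since $V\log V$ grows faster than any linear function of $V$, the integrand $\exp(kV-cV\log V)$ is superexponentially small throughout this region, so this contribution is negligible. The main obstacle, if any, is just verifying that the $o(1)$ in the exponent of the Gaussian bound indeed costs only a $(\log X)^{\varepsilon}$ factor after integration, which is routine: the deviation in the Gaussian exponent over the range of integration is bounded, and the prefactor $\sqrt{\log\log X}$ is clearly absorbed. Summing the three contributions and comparing with the target yields the proposition.
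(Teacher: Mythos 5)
Your proposal is correct and takes essentially the same approach as the paper: the paper sets up the integral representation \eqref{momentint}, records the trivial bound for small $V$, the vanishing for $V$ beyond $6\log X/\log\log X$, and cites Proposition~\ref{propNbound}, after which it compresses the rest into ``readily leads to.'' Your three-region split, the completing-the-square computation in the Gaussian range, and the check that the $o(1)$ in the exponent costs only $(\log X)^{\varepsilon}$ are precisely the details being elided there, and they are all carried out correctly.
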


  Now, we take exponentials on both sides of \eqref{equ:3.3'} and deduce that
\begin{align}
\label{basicest}
\begin{split}
 & L \left( \frac{1}{2}, \chi_{8p} \right)^{k}  \ll
 \exp \left(k \left( \sum_{\substack{  q \leq x }} \frac{\chi_{8p} (q)}{q^{1/2+1/\log x}}
 \frac{\log (x/q)}{\log x} +\frac 12 \log \log x
 +\frac{\log X}{\log x}  \right)\right ).
\end{split}
 \end{align}
  To estimate the right side expression above, we denote
\begin{align*}
{\mathcal M}_{i,j}(p) =& \sum_{X^{\alpha_{i-1}} < q \leq X^{\alpha_{i}}}  \frac{\chi_{8p} (q)}{q^{\tfrac{1}{2}+1/(\log X^{\alpha_{j}})}}
\frac{\log (x^{\alpha_{j}}/q)}{\log x^{\alpha_{j}}}, \quad 1\leq i \leq j \leq \mathcal{I} .
\end{align*}

  We also define the following set:
\begin{align*}
 \mathcal{S}(0) =& \{ 0< p \leq X : |  {\mathcal M}_{1,l}(p)| > \alpha_{1}^{-3/4} \; \text{ for some } 1 \leq l \leq \mathcal{I} \} ,   \\
 \mathcal{S}(j) =& \{ 0< p \leq X : |  {\mathcal M}_{i,l}(p)| \leq  \alpha_{i}^{-3/4} \; \mbox{for all}\; 1 \leq i \leq j, \; \mbox{and} \; i \leq l \leq \mathcal{I}, \\
 & \hspace*{2cm} \text{but} \; |{\mathcal M}_{j+1,l}(p)| > \alpha_{j+1}^{-3/4} \; \text{ for some } j+1 \leq l \leq \mathcal{I} \} , \quad 1\leq j \leq \mathcal{I}, \\
 \mathcal{S}(\mathcal{I}) =& \{0< p \leq X : |{\mathcal M}_{i, \mathcal{I}}(p)| \leq \alpha_{i}^{-3/4} \; \mbox{for all} \; 1 \leq i \leq \mathcal{I}\}.
\end{align*}

   As
$$ \{ 0< p \leq X   \}=\bigcup_{j=0}^{ \mathcal{I}} \mathcal{S}(j), $$
 it remains to show that
\begin{align}
\label{sumovermj'}
  \sum_{j=0}^{\mathcal{I}}\sum_{p \in \mathcal{S}(j)} (\log p) L \left( \frac{1}{2}, \chi_{8p} \right)^{2}\mathcal{N}(p, 2k-2)\Phi \left( \frac pX \right)  \ll  X(\log X)^{((2k)^2+2)/2}.
\end{align}

   We let $W(t)$ be any non-negative smooth function that is supported on $(1/2-\varepsilon_1, 1+\varepsilon_1)$ for some fixed small $0<\varepsilon_1<1/2$ such that $W(t) \gg 1$ for $t \in (1/2, 1)$. We then notice that
\begin{align}
\label{S0est}
\begin{split}
\text{meas}(\mathcal{S}(0)) \ll \sum_{\substack{(p,2)=1 }}
\sum^{\mathcal{I}}_{l=1}
\Big ( \alpha^{3/4}_{1}{|\mathcal
M}_{1, l}(\chi)| \Big)^{2\lceil 1/(10\alpha_{1})\rceil }W \left( \frac pX \right) =
\sum^{\mathcal{I}}_{l=1}\sum_{\substack{(p,2)=1 }} \Big (
\alpha^{3/4}_{1}{|\mathcal
M}_{1, l}(\chi)| \Big)^{2\lceil 1/(10\alpha_{1})\rceil }W \left( \frac p X \right) .
\end{split}
\end{align}

   We now apply the estimates in \eqref{sump1} to evaluate the last sums in \eqref{S0est}.  Using an approach similar to the proof of Proposition~\ref{Prop4}, we get
\begin{align}
\label{S0bound}
\text{meas}(\mathcal{S}(0)) \ll &
\mathcal{I}X e^{-1/(6\alpha_{1})}\ll X e^{-(\log\log X)^{2}/10}  .
\end{align}

   We then deduce via the Cauchy-Schwarz inequality that
\begin{align} \label{LS0bound}
\begin{split}
\sum_{\chi \in  \mathcal{S}(0)} & (\log p) L \left( \frac{1}{2}, \chi_{8p} \right)^{2}\mathcal{N}(p, 2k-2)\Phi \left( \frac pX \right)  \\
\leq &  \Big ( (\log X) \text{meas}(\mathcal{S}(0)) \Big )^{1/4} \left(
\sum_{\substack{(p,2)=1 }} (\log p)L \left( \frac{1}{2}, \chi_{8p} \right)^{8}\Phi \left( \frac pX \right) \right)^{1/4} \Big ( \sum_{\substack{(p,2)=1 }} (\log p)\mathcal{N}(p, 2k-2)^{2} \Phi \left( \frac pX \right) \Big )^{1/2}.
\end{split}
\end{align}

  Similar to the proof of Proposition \ref{Prop5}, we have that
\begin{align} \label{N2k2bound}
 \sum_{\substack{(p,2)=1 }} (\log p)\mathcal{N}(p, 2k-2)^{2} \Phi \left( \frac pX \right) \ll X ( \log X  )^{(2(2k-2))^2/2}.
\end{align}

Also, by Proposition \ref{prop: upperbound} with $\varepsilon=1$, we have that
\begin{align}
\label{L8bound}
\sum_{\substack{(p,2)=1 }} (\log p)L \left( \frac{1}{2}, \chi_{8p} \right)^{8}\Phi \left( \frac pX \right) \leq \sum_{\substack{0<p\leq 4X}} (\log p)L(\half, \chi_{8p})^{8} \ll
X(\log X)^{37}.
\end{align}

  Applying the bounds given in \eqref{S0bound}, \eqref{N2k2bound} and \eqref{L8bound} in \eqref{LS0bound}, we deduce that
\begin{align*}
\sum_{\chi \in  \mathcal{S}(0)} (\log p) L \left( \frac{1}{2}, \chi_{8p} \right)^{2}\mathcal{N}(p, 2k-2)\Phi \left( \frac pX \right)  \ll X(\log X)^{k(k+1)/2}.
\end{align*}

  The above estimation implies that it remains to consider the cases $j \geq 1$ in \eqref{sumovermj'}. When $p \in \mathcal{S}(j)$, we set $x=X^{\alpha_j}$ in \eqref{basicest} to see
  that
\begin{align}
\label{Lkbound}
\begin{split}
 & L \left( \frac{1}{2}, \chi_{8p} \right)^{k} \ll (\log X)^{k/2} \exp \Big(\frac {k}{\alpha_j} \Big) \exp \Big (
 k \sum^j_{i=0}{\mathcal M}_{i,j}(p) \Big ).
\end{split}
 \end{align}

   As we have $M_{i, j} \leq  \alpha^{-3/4}_i$ when $p \in \mathcal{S}(j)$, we argue as in the proof of \cite[Lemma 5.2]{Kirila} to see that
\begin{align}
\label{eMbound}
\begin{split}
\exp \Big ( k\sum^j_{i=1}{\mathcal M}_{i,j}(p)\Big ) \ll
\prod^j_{i=1}E_{e^2k\alpha^{-3/4}_i}(k{\mathcal M}_{i,j}(p)).
\end{split}
 \end{align}

   We then deduce from the description on $\mathcal{S}(j)$ that when $j \geq 1$,
\begin{align*}
\begin{split}
 \sum_{p \in \mathcal{S}(j)} (\log p) & L \left( \frac{1}{2}, \chi_{8p} \right)^{2}\mathcal{N}(p, 2k-2)\Phi(\frac pX)  \\
 \ll &  (\log X) \exp \left(\frac {2}{\alpha_j} \right)
 \sum^{ \mathcal{I}}_{l=j+1} \sum_{\substack{2<p \leq X}} \exp \Big ( 2 \sum^j_{i=1}{\mathcal M}_{i,j}(p)\Big )\mathcal{N}(p, 2k-2) \Big ( \alpha^{3/4}_{j+1}{\mathcal
M}_{j+1, l}(p)\Big)^{2\lceil 1/(10\alpha_{j+1})\rceil } \\
\ll & (\log X) \exp \left(\frac {2}{\alpha_j}\right)  \sum^{ \mathcal{I}}_{l=j+1}
\sum_{\substack{2<p \leq X}}
\prod^j_{i=1} E_{e^2k\alpha^{-3/4}_i}(2{\mathcal M}_{i,j}(p))E_{e^2k\alpha^{-3/4}_i}((2k-2){\mathcal P}_{i}(p)) \\
& \hspace*{1cm} \times E_{e^2k\alpha^{-3/4}_{j+1}}((2k-2){\mathcal P}_{j+1}(p)) \Big ( \alpha^{3/4}_{j+1}{\mathcal
M}_{j+1, l}(p)\Big)^{2\lceil 1/(10\alpha_{j+1})\rceil } \prod^{\mathcal{I}}_{i=j+2} E_{e^2k\alpha^{-3/4}_i}((2k-2){\mathcal P}_{i}(p)).
\end{split}
\end{align*}

We now apply \eqref{sumpj} and proceed as in the proofs of Proposition \ref{Prop4} to arrive at
\begin{align*}
\begin{split}
  \sum_{p \in \mathcal{S}(j)}(\log p) & L \left( \frac{1}{2}, \chi_{8p} \right)^{2}\mathcal{N}(p, 2k-2)\Phi \left( \frac pX \right)  \\
 \ll &  (\log X) \exp \Big(\frac {2}{\alpha_j} \Big) (\mathcal{I}-j)e^{-44/\alpha_{j+1}} \prod_{p \leq X^{\alpha_{\mathcal{I}}}}\Big  (1+\frac {(2k)^2}{2p}+O \Big( \frac1{p^2} \Big) \Big ) \ll  e^{-42/\alpha_{j+1}}X (\log X)^{((2k)^{2}+2)/2}.
\end{split}
\end{align*}

 As $20/\alpha_{j+1}=1/\alpha_j$, we conclude from the above that
\begin{align*} \begin{split}
 & \sum_{p \in \mathcal{S}(j)}(\log p) L \left( \frac{1}{2}, \chi_{8p} \right)^{2}\mathcal{N}(p, 2k-2)\Phi \Big( \frac pX \Big) \ll e^{-1/(10\alpha_{j})}X(\log X)^{((2k)^{2}+2)/2}.
\end{split}
 \end{align*}

   As the sum of the right side expression above over $j$ converges, we see that the above estimation implies \eqref{sumovermj'}
and this completes the proof of Proposition \ref{Prop6}.

\section{Proof of Theorem \ref{thmupperbound}}

  The proof of Theorem \ref{thmupperbound} is similar to that of Proposition \ref{Prop6}. Thus we shall be brief here. In keeping the notations in Section \ref{sec: proof of Prop 6}, we see that it suffices to show that for $k>0$,
\begin{align}
\label{sumovermj}
  \sum_{j=0}^{\mathcal{I}}\sum_{p \in \mathcal{S}(j)} (\log p) L \left( \frac{1}{2}, \chi_{8p} \right)^{k}  \ll  X(\log X)^{k(k+1)/2}.
\end{align}

  Once again we may only examine the case $j \geq 1$ here. We apply \eqref{Lkbound} and \eqref{eMbound} to see that when $j \geq 1$,
\begin{align*} \begin{split}
 \sum_{p \in \mathcal{S}(j)}(\log p) & L \left( \frac{1}{2}, \chi_{8p} \right)^{k} \\
\ll & (\log X)^{k/2}\exp \Big(\frac {k}{\alpha_j} \Big) \sum^{\mathcal{I}}_{l=j+1}
\sum_{\substack{2<p \leq X}}
\prod^j_{i=1} E_{e^2k\alpha^{-3/4}_i}(k{\mathcal M}_{i,j}(\chi))\Big ( \alpha^{3/4}_{j+1}{\mathcal
M}_{j+1, l}(\chi)\Big)^{2\lceil 1/(10\alpha_{j+1})\rceil } .
\end{split}
 \end{align*}

 Applying \ref{sumpj} and arguing as in the proof of Proposition \ref{Prop4}, we obtain that
\begin{align*}
\begin{split}
 \sum^{\mathcal{I}}_{l=j+1} \sum_{\substack{2<p \leq X}} &
\prod^j_{i=1} E_{e^2k\alpha^{-3/4}_i}(k{\mathcal M}_{i,j}(\chi)) \Big ( \alpha^{3/4}_{j+1}{\mathcal
M}_{j+1, l}(\chi)\Big)^{2\lceil 1/(10\alpha_{j+1})\rceil }  \\
\ll & X(\mathcal{I}-j)e^{-22k/\alpha_{j+1}} \prod_{p \leq X^{\alpha_j}}\Big  (1+\frac {k^2}{2p}+O \Big( \frac
1{p^2} \Big) \Big)  \ll  e^{-21k/\alpha_{j+1}}X (\log X)^{k^{2}/2}.
\end{split}
 \end{align*}

   We conclude from the above that
\[ \sum_{p \in \mathcal{S}(j)}(\log p) L \left( \frac{1}{2}, \chi_{8p} \right)^{k} \ll e^{-k/(20\alpha_{j})}X(\log X)^{k(k+1)/2}. \]

Summing over  $j$ now leads to \eqref{sumovermj} and this completes the proof of Theorem \ref{thmupperbound}.
\vspace*{.5cm}

\noindent{\bf Acknowledgments.}  P. G. is supported in part by NSFC grant 11871082 and L. Z. by the FRG grant PS43707 at the University of New South Wales (UNSW). The authors are grateful to the anonymous referee for his/her very careful reading of this manuscript and many helpful comments and suggestions.

\bibliography{biblio}
\bibliographystyle{amsxport}

\vspace*{.5cm}

\noindent\begin{tabular}{p{8cm}p{8cm}}
School of Mathematical Sciences & School of Mathematics and Statistics \\
Beihang University & University of New South Wales \\
Beijing 100191 China & Sydney NSW 2052 Australia \\
Email: {\tt penggao@buaa.edu.cn} & Email: {\tt l.zhao@unsw.edu.au} \\
\end{tabular}

\end{document}